\newtheorem{theorem}{Theorem}[section]
\newtheorem{prop}[theorem]{Proposition}
\newtheorem{coro}[theorem]{Corollary}
\newtheorem{ex}[theorem]{Example}
\newtheorem{ob}[theorem]{Observation}
\newcommand\from{\nobreak \hskip .1111em\mathpunct {}\nonscript \mkern-\thinmuskip {:}\hskip.3333emplus.0555em\relax}
\newcommand\id{\operatorname{id}}
\newcommand\reals{\mathbb R}
\newcommand\refArhangel{Arha}
\newcommand\refCollins{Collins}
\newcommand\refEngelking{Engelking}
\newcommand\refFlood{Flood}
\newcommand\refGabriyelyan{Gabriyelyan}
\newcommand\refKarnik{Karnik}
\newcommand\refMichael{Michael}
\newcommand\refOkunevb{Okunev1990}
\newcommand\refOkuneva{Okunev1995}
\newcommand\refHoshina{Hoshina2002}
\newcommand\refSanchez{Sanchez2014}
\newcommand\refSchaefer{Schaefer}
\newcommand\refSennott{Sennott}
\newcommand\refUspenskii{Uspenskii}
\newcommand\refYamazaki{Yamazaki}
\begin{document}
\title{$L$-retracts}
\author{Rodrigo Hidalgo Linares, Oleg Okunev\\
Benem\'erita Universidad Aut\'onoma de Puebla. Puebla, Mexico
}

\maketitle

\begin{abstract}
We study the relation of $L$-equivalence, which derives from the construction of the free locally convex spaces, 
through a concept that particularizes several notions related to the simultaneous extension of continuous functions. 
We also explore the relationship that this concept has with the Dugundji's extension theorem, and, based on this theorem we give sufficient conditions that allow us to identify these sets in different types of topological spaces. 
In particular, we present a method for constructing examples of $L$-equivalent mappings (and hence $L$-equivalent spaces)
that show that the properties of being an open or closed mapping are not $L$-invariant.
\end{abstract}

MSC-class: 46A03

\section{Introduction}

Let $\mathcal C$ be a class of topological spaces. A topological space $Y$ is called an absolute extender (AE) for the 
class $\mathcal{C}$ if, given a topological space $X$, a closed set $A\subset X$ and a continuous function 
$f\from A\to Y$, there is a continuous extension $F\from X\to Y$. In this sense, Dugundji's extension theorem tells us that 
locally convex spaces are absolute extenders for the class of metric spaces. With this in mind, consider a functor 
$\textbf{F}\from \textbf{Tych} \to \mathcal C$ that goes from the class of Tychonoff spaces to the class of 
topological spaces $\mathcal{C}$ and that assigns to each topological space $X$ a topological space $F(X)\in \mathcal{C}$ 
so that $F(X)$ contains a closed embedded copy of $X$ and so that every continuous function
$f\from X\to Y$, $Y\in \mathcal C$, has a continuous extension $f_\#\from F(X)\to Y$. If $A\subset X$ is such that
$F(A)$ is a subspace of $F(X)$, we will say that $A$ is $F$-embedded in $X$. Let us take a topological space $X$
and a subspace $A$, if any continuous function $f\from A\to Y$, $Y\in \mathcal C$, has a continuous extension 
$\tilde f\from X\to Y$, we will say that $A$ is an {\it $F$-valued retract} (or an {\it $F$-retract}) of $X$. Thus,
a continuous function $r\from X\to F(A)$ is called an {\it $F$-retraction}, or an {\it $F$-valued retraction}, if the 
restriction of $r$ to $A$ coincides with the embedding of $A$ in $F(A)$ and there is a continuous retraction 
$r_\#\from F(X)\to F(A)$ that extends $r$. In particular, $A$ is an {\it $L$-retract\/} of $X$ if $A$ is $L$-embedded in 
$X$ and the subspace $L(A)$ of $L(X)$ spanned by $A$ is a linear retract of $L(X)$.

Let us denote by $\textbf{HLocon}$ the category that has as objects the Hausdorff locally convex spaces and whose arrows are
the continuous linear mappings between them. Considering all of the above, we are especially interested in studying 
the $L$-valued retracts, where $\textbf{L}\from \textbf{Tych} \to \textbf{HLocon}$ is the functor that assigns to each
Tychonoff space $X$ its free locally convex space $L(X)$. 
This interest is largely motivated by the recent interest to the free locally convex spaces in current mathematical research.
In addition, as is well known, free locally convex spaces have a strong link with weak spaces of continuous functions, 
and although in general it is not possible to establish a natural topology $\eta$ on $C(X)$ so that $L(X)$ and $(C(X),\eta)$
be a dual pair just like $L_p(X)$ ($L(X)$ endowed with its $*$-weak topology) and $C_p(X)$, we can introduce concepts for
$C_p(X)$ motivated by concepts in the theory of $L(X)$. In particular, we will see that the $L$-retracts lead to a concept
in the $C_p$-theory that is stronger than the notion of an $\ell$-embedded set.

Specifically, these concepts are so similar that, based on them, we will try to carry out a study of the relation of 
$L$-equivalence in the same way that the relation of $\ell$-equivalence (which derives from the constructions of the weak 
spaces of continuous functions) has been investigated. In fact, we will see that the relation of $L$-equivalence can be
studied from the field of $C_p$-theory, and that this connection only implies imposing a minor extra condition. Although our
results focus on the $L$-equivalence of continuous mappings, we should keep in mind that this implies the $L$-equivalence of
topological spaces.

\section{Basic properties of free locally convex spaces}

In what follows, every topological space is assumed to be Tychonoff, that is, $T_1$ and completely regular. Likewise,
all topological vector spaces are assumed to be Hausdorff and are over $\reals$. The weak topological dual of a locally
convex spaces $E$ will be denoted by $E'$. We say that $E$ is {\it weak\/} if $E$ is topologically isomorphic to $(E')'$
(equivalently, the topology of $E$ is projective with respect to $E'$).

We define the free locally convex space (in the Markov sense) over a topological space $X$ as a pair $(\delta_X, L(X))$
formed by a continuous injection $\delta_X\from X\to L(X)$ and a locally convex space $L(X)$ such that $L(X)$ is the linear
span of $\delta_X(X)$  and for each continuous function $f\from X\to E$ to a locally convex space $E$ there is a continuous
linear mapping $f_\#\from L(X)\to E$ such that $f=f_\#\circ \delta_X$. Similarly to Graev, we define the free locally convex
space in the Graev sense over the topological space with a distinguished point $(X, x_{0})$ as a pair 
$(\delta_{X}, GL(X,x_0))$ formed by a continuous injection $\delta_X\from X\to GL(X,x_0)$ with $\delta_{X}(x_0)=0$ and
a locally convex space  $GL(X,x_0)$ such that $GL(X,x_0)$ is the linear span of $\delta_X(X)$ and for every continuous
function $f\from X\to E$ where $E$ is a locally convex space and $f(x_0)=0$, there is a unique continuous linear mapping 
$f_\#\from GL(X,x_{0})\to E$ such that $f=f_\#\circ \delta_X$.

The mapping $\delta_X$ is know as {\it the Dirac's embedding}, and for each $x\in X$ we have $\delta_X(x)=\delta_x$ is a 
linear functional that assigns to each $f\in \reals^X$ its value at $x$, that is, $\delta_x(f)=f(x)$. In this sense, we can view
the set $L(X)$ ($GL(X,x_0)$) as the set of finite linear combinations
$\lambda_1\delta_{x_1}+\dots +\lambda_n\delta_{x_n}$ with $n\in \mathbb N$, $\lambda_i\in \reals$ and 
$x_i\in X$ ($x_i\in X\setminus \{x_{0}\}$). The following facts are well known \cite\refGabriyelyan:

\begin{theorem}\label{Fundamental}
Let $X$ be a topological space and $x_{0}, x_1 \in X$ two different points. Then
\begin{enumerate}
\item[(1)] The spaces $L(X)$ and $GL(X,x_{0})$ always exist and are unique up to a topological isomorphism;
\item[(2)] $\delta_X(X)$ is a Hamel base for $L(X)$, and $\delta_X(X\setminus \{x_0\})$ is a Hamel base for $GL(X,x_{0})$;
\item[(3)] The topologies of $L(X)$ and $GL(X,x_{0})$ are Hausdorff and make the Dirac's embedding a topological
embedding, so that $X$ is embedded in $L(X)$ and $GL(X,x_{0})$ as closed subspace;
\item[(4)] For any $x_0, x_1\in X$, the spaces $GL(X,x_0)$ and $GL(X,x_1)$ are topologically isomorphic.
\end{enumerate}
\end{theorem}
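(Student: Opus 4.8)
The plan is to treat the four items in turn, building everything on the free (algebraic) vector space together with the universal property.

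For existence in (1) I would start with the vector space $V$ of all formal finite combinations $\sum_{i=1}^{n}\lambda_i x_i$ with $x_i\in X$, that is, the free real vector space on the underlying set of $X$, and let $\delta_X\from X\to V$ send $x$ to the basis vector it determines. For every continuous $f\from X\to E$ into a locally convex space $E$ there is exactly one linear map $f_\#\from V\to E$ with $f_\#\circ\delta_X=f$ (freeness of $V$), and I would topologize $V$ with the projective locally convex topology determined by the family of all these $f_\#$, equivalently by the seminorms $p\circ f_\#$ as $p$ ranges over the continuous seminorms of $E$. By construction each $f_\#$ is continuous, which is the universal property, and since this topology is initial with respect to the $f_\#$, continuity of $\delta_X$ reduces to continuity of each $f_\#\circ\delta_X=f$. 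This produces a pair $(\delta_X,L(X))$ with the required properties; the Graev space $GL(X,x_0)$ is obtained identically, restricting to the $f$ with $f(x_0)=0$ and imposing $\delta_{x_0}=0$.

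For uniqueness in (1) I would run the standard categorical argument: if $(\delta,L)$ and $(\delta',L')$ both satisfy the universal property, applying it to $\delta'$ and to $\delta$ yields continuous linear maps $g\from L\to L'$ and $h\from L'\to L$ with $g\circ\delta=\delta'$ and $h\circ\delta'=\delta$; then $h\circ g$ and $\id_L$ are both linear maps carrying $\delta$ to $\delta$, so by the uniqueness clause they coincide, and symmetrically $g\circ h=\id_{L'}$, making $g$ a topological isomorphism. Item (2) is then immediate: $\delta_X(X)$ spans $V$ and is a Hamel basis by freeness, and its vectors are distinct because, $X$ being Tychonoff, for $x\ne y$ some $f\in C(X)$ separates them, whence $f_\#(\delta_x)=f(x)\ne f(y)=f_\#(\delta_y)$; for the Graev space one drops $\delta_{x_0}=0$ and $\delta_X(X\setminus\{x_0\})$ is the basis.

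For (3) the separation input is that finite subsets of a Tychonoff space are $C$-embedded, so given distinct $x_1,\dots,x_n$ and any reals $c_1,\dots,c_n$ there is $f\in C(X)$ with $f(x_i)=c_i$ (take $f=\sum c_i g_i$ with Urysohn functions $g_i$ equal to $1$ at $x_i$ and $0$ at the other points). Hausdorffness follows because for $0\ne v=\sum\lambda_i\delta_{x_i}$ such an $f$ gives $f_\#(v)=\lambda_1\ne0$, and that $\delta_X$ is a homeomorphism onto its image follows because a function separating a point from a closed set transports to a continuous functional on $L(X)$ doing the same. The delicate point, and the one I expect to be the main obstacle, is that $\delta_X(X)$ is closed. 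I would reduce it using the augmentation $u=\mathbf 1_\#\from L(X)\to\reals$, $u(\sum\lambda_i\delta_{x_i})=\sum\lambda_i$: every $\delta_x$ lies on the hyperplane $u^{-1}(1)$, so a combination $v=\sum_{i=1}^{n}\lambda_i\delta_{x_i}$ (distinct points, nonzero coefficients) that is no $\delta_x$ either has $u(v)\ne1$, hence is separated by $u$, or has $u(v)=1$ with $n\ge2$. In the latter case I would test $v$ against $F_\#$ for $F=(f,f^2)\from X\to\reals^2$: the image of every $\delta_x$ lands on the parabola $\{(t,t^2)\}$, whereas $F_\#(v)=\bigl(\sum\lambda_i f(x_i),\sum\lambda_i f(x_i)^2\bigr)$ can be forced off this closed set by choosing the distinct values $f(x_i)$ away from the proper algebraic hypersurface $\sum\lambda_i t_i^2=(\sum\lambda_i t_i)^2$, which is possible precisely because $n\ge2$ forces the two polynomials to differ. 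Then $F_\#^{-1}$ of a small ball around $F_\#(v)$ is a neighborhood of $v$ missing $\delta_X(X)$.

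Finally, for (4) I would avoid comparing the two Graev spaces directly and instead identify each with the same subspace of $L(X)$. The map $x\mapsto\delta_x-\delta_{x_0}$ is continuous, vanishes at $x_0$, and lands in $L_0(X):=\ker u$, so it extends to a continuous linear $j\from GL(X,x_0)\to L_0(X)$; conversely $\delta_X\from X\to GL(X,x_0)$, which kills $x_0$, extends to a continuous linear $\Delta\from L(X)\to GL(X,x_0)$, and I would put $k=\Delta|_{L_0(X)}$. Evaluating $k\circ j$ and $j\circ k$ on basis vectors shows both are identities, the defining relation $\sum\lambda_i=0$ on $L_0(X)$ being exactly what makes $\sum\lambda_i(\delta_{x_i}-\delta_{x_0})=\sum\lambda_i\delta_{x_i}$, so $GL(X,x_0)\cong L_0(X)$. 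Since $L_0(X)=\ker u$ does not depend on the base point, it follows that $GL(X,x_0)\cong GL(X,x_1)$ for all $x_0,x_1$, which is (4).
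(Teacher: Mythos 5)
Your proposal is correct, but there is no internal proof to compare it against: the paper states this theorem as a list of well-known facts and cites Gabriyelyan--Morris, so your argument stands as a self-contained alternative to the literature. The main difference is in the construction of the topology: the standard treatments either take the finest locally convex topology on the free vector space $V$ making $\delta_X$ continuous, or describe it concretely (as the paper later recalls from Flood and Uspenskij) as the topology of uniform convergence on equicontinuous pointwise bounded subsets of $C(X)$; you instead take the initial topology generated by the seminorms $q\circ f_\#$, which makes the universal property and the continuity of $\delta_X$ tautological, and which in fact coincides with the finest locally convex topology making $\delta_X$ continuous (take $E=(V,\tau_{\max})$ and $f=\delta_X$ among the targets). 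Your handling of the delicate points is sound: the categorical uniqueness argument (uniqueness of extensions is not explicitly demanded in the paper's Markov definition, but follows from the requirement that $L(X)$ be the linear span of $\delta_X(X)$), the prescription of finitely many values by sums of Urysohn functions, and the classical closedness argument via the augmentation $u$ together with the parabola test $F=(f,f^2)$, which works because $\sum_i\lambda_i t_i^2-\bigl(\sum_i\lambda_i t_i\bigr)^2$ is a nonzero polynomial once $n\ge 2$ and all $\lambda_i\ne 0$. Note also that your item (4), identifying $GL(X,x_0)$ with $\ker u=L^0(X)$, is precisely the paper's later proposition that $L^0(X)$ and $GL(X,x_0)$ are topologically isomorphic, so you prove that statement en route.

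Two loose ends, both easily repaired. First, your item (3) is argued only for $L(X)$, while the theorem asserts it also for $GL(X,x_0)$. This does follow from your item (4): under your isomorphism the Graev Dirac map corresponds to $x\mapsto\delta_x-\delta_{x_0}$, which is $\delta_X$ composed with the homeomorphism $\alpha\mapsto\alpha-\delta_{x_0}$ of $L(X)$, so Hausdorffness, the embedding property, and closedness of the image all transfer from the Markov case to the Graev case; there is no circularity, since your proof of (4) uses only the universal properties and not the Graev half of (3). But you should state this step. Second, the collection of all pairs $(E,f)$ over which you index the initial topology is a proper class; this is harmless, since the associated seminorms $q\circ f_\#$ form a subset of the set of all seminorms on $V$, but a careful write-up should say so (or restrict at the outset to a generating set of targets).
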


To simplify notation, we will assume in what follows that $X$ is a subset of $L(X)$. The next statement is immediate from the
definition.

\begin{coro}
A linear mapping $f\from L(X)\to E$ to a locally convex space $E$ is continuous if and only if the restriction $f|X$
is continuous.
\end{coro}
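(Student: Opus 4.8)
The plan is to prove both directions of the biconditional, with the nontrivial direction relying on the universal property of $L(X)$ established in Theorem~\ref{Fundamental}. The easy direction is immediate: if $f\from L(X)\to E$ is continuous, then since $\delta_X\from X\to L(X)$ is a topological embedding (part (3) of Theorem~\ref{Fundamental}), the composition $f\circ\delta_X$ is continuous, and under our convention that $X\subset L(X)$ this composition is precisely the restriction $f|X$. So I would dispose of this implication in a single sentence.

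For the converse, suppose $f\from L(X)\to E$ is a linear map whose restriction $f|X$ is continuous. First I would regard $f|X$ as a continuous function $X\to E$ into the locally convex space $E$. Applying the universal property from the definition of $L(X)$, there exists a continuous linear mapping $(f|X)_\#\from L(X)\to E$ with $f|X = (f|X)_\#\circ\delta_X$, i.e. $(f|X)_\#$ agrees with $f$ on $X$. The key step is then to argue that $f$ and $(f|X)_\#$ coincide on all of $L(X)$, not merely on $X$: both are linear maps, and by part (2) of Theorem~\ref{Fundamental}, $\delta_X(X)=X$ is a Hamel basis for $L(X)$, so any two linear maps agreeing on $X$ agree on every finite linear combination $\lambda_1 x_1+\dots+\lambda_n x_n$, hence everywhere. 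Therefore $f=(f|X)_\#$ is continuous.

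The main subtlety to watch for is that the universal property as stated guarantees a continuous linear extension of the continuous function $f|X$, but \emph{a priori} that extension $(f|X)_\#$ need not be the same map as $f$; it is the Hamel-basis argument that forces the identification, and this is where part (2) does the real work. In the Markov-sense definition the linear map furnished by the universal property is unique (as follows again from $X$ being a Hamel basis), so one could alternatively phrase the argument by invoking uniqueness directly. I expect no genuine obstacle here — the statement is precisely the translation of the universal property from continuous functions into linearity-plus-continuity-on-the-generators — so the proof is short, and the only care needed is to keep straight the convention $X\subset L(X)$ that lets us write $f|X$ rather than $f\circ\delta_X$.
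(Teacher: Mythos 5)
Your proof is correct and follows exactly the route the paper intends: the paper offers no written proof, declaring the statement ``immediate from the definition,'' and your argument --- the universal property applied to $f|X$, followed by the Hamel-basis fact from Theorem~\ref{Fundamental}(2) to identify $f$ with the extension $(f|X)_\#$ --- is precisely the fleshing-out of that remark. Your attention to the subtlety that the extension must be shown to coincide with $f$ (equivalently, that uniqueness of $f_\#$ follows from $X$ being a Hamel basis, since the Markov-sense definition in the paper does not state uniqueness explicitly) is exactly the right point to make.
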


In a categorical context, if we denote by $\textbf{Tych}_*$ the category of Tychonoff spaces with distinguished point and
continuous functions that preserve the distinguished points, Theorem \ref{Fundamental} tells us is that the forgetful
functors $\textbf{U}\from \textbf{HLocon}\to \textbf{Tych}$ and $\textbf{U}_{*}\from \textbf{HLocon}\to \textbf{Tych}_{*}$ have
left adjoint functors $\textbf{L}\from \textbf{Tych} \to \textbf{HLocon}$ and 
$\textbf{GL}\from \textbf{Tych}_* \to \textbf{HLocon}$. Note that there is also an adjunction 
$\textbf{V}\from \textbf{Tych}_*\to \textbf{Tych}$ and $\textbf{P}\from \textbf{Tych}\to \textbf{Tych}_*$, where 
$\textbf{V}$ is the forgetful functor and $\textbf{P}$ is the functor that assigns to each topological space $X$ the 
topological space $X^+=(X,a_X)$, where $a_X$ is an isolated point that does not belong to $X$, and to each continuous 
function $f\from X\to Y$ assigns $f^+\from X^+\to Y^+$ so that $f^+|X=f$ and $f^+(a_X)=a_Y$. Taking this into account, we
have the following results.

\begin{coro}\label{natiso}
The functors $\textbf{L}$ and $\textbf{GL}\circ \textbf{P}$ are naturally isomorphic; moreover, both $\textbf{L}$ and
 $\textbf{GL}$ respect finite coproducts.
\end{coro}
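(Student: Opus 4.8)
The plan is to treat both assertions as formal consequences of the adjunctions recorded just before the statement, avoiding any direct manipulation of finite linear combinations. For the natural isomorphism, I would first observe that $\textbf{GL}\circ\textbf{P}$ is a composite of left adjoints, hence itself a left adjoint: since $\textbf{P}$ is left adjoint to $\textbf{V}$ and $\textbf{GL}$ is left adjoint to $\textbf{U}_*$, the composite $\textbf{GL}\circ\textbf{P}$ is left adjoint to $\textbf{V}\circ\textbf{U}_*$. The key computation is then to identify this composite right adjoint. Unwinding the definitions, $\textbf{U}_*$ sends a Hausdorff locally convex space $E$ to the pointed space $(E,0)$ (the distinguished point being the zero vector, as demanded by the Graev normalization $\delta_X(x_0)=0$) and a continuous linear map $T$ to itself, which automatically preserves $0$; applying $\textbf{V}$ then forgets the basepoint. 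Hence $\textbf{V}\circ\textbf{U}_*=\textbf{U}$ on the nose, both on objects and on morphisms.

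Once this identification is in place, the first claim is immediate: $\textbf{L}$ and $\textbf{GL}\circ\textbf{P}$ are both left adjoint to $\textbf{U}$, and left adjoints to a fixed functor are unique up to natural isomorphism, so $\textbf{L}\cong\textbf{GL}\circ\textbf{P}$. For readers who prefer a concrete description, I would also exhibit the isomorphism directly: for a space $X$, the point $a_X$ is isolated in $X^+$, so any continuous $f\from X\to E$ into a locally convex space extends to a continuous $f^+\from X^+\to E$ by setting $f^+(a_X)=0$; feeding $f^+$ into the universal property of $GL(X^+,a_X)$ and using that $\delta_{X^+}(X^+\setminus\{a_X\})=\delta_{X^+}(X)$ is a Hamel base (Theorem \ref{Fundamental}(2)) shows that $GL(X^+,a_X)$, together with $\delta_{X^+}|X$, satisfies exactly the universal property defining $L(X)$. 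Uniqueness of the free object (Theorem \ref{Fundamental}(1)) then yields the isomorphism, and naturality follows from the uniqueness clause in each universal property.

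For the second claim, the point is simply that a left adjoint preserves all colimits that exist in its domain, in particular coproducts. Thus I would only need to check that finite coproducts are present in each category and are computed as expected: in $\textbf{Tych}$ and $\textbf{Tych}_*$ they are the (pointed) topological sums, and in $\textbf{HLocon}$ they are the locally convex direct sums, all of which remain within the respective categories for finitely many summands. Granting this, $\textbf{L}$ and $\textbf{GL}$, being left adjoints to $\textbf{U}$ and $\textbf{U}_*$ respectively, carry finite sums of spaces to the corresponding finite direct sums of free locally convex spaces.

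I expect the only genuine subtlety to be bookkeeping rather than mathematics: correctly matching the basepoint conventions so that $\textbf{V}\circ\textbf{U}_*$ equals $\textbf{U}$ and not merely a naturally isomorphic functor, and confirming that the relevant coproducts live in the stated categories (for instance, that a finite topological sum of Tychonoff spaces is Tychonoff and a finite locally convex direct sum of Hausdorff spaces is again Hausdorff and locally convex). No step should require estimating seminorms or writing out explicit extensions beyond the one-point extension used in the concrete description.
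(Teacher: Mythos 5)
Your argument is correct and is essentially the paper's own: the corollary is presented there as an immediate consequence of the adjunctions $\textbf{L}\dashv\textbf{U}$, $\textbf{GL}\dashv\textbf{U}_*$ and $\textbf{P}\dashv\textbf{V}$, exactly via uniqueness of left adjoints (after noting $\textbf{V}\circ\textbf{U}_*=\textbf{U}$) and the fact that left adjoints preserve whatever coproducts exist. One caution on wording: the finite coproduct in $\textbf{Tych}_*$ is the wedge sum (topological sum with the basepoints identified), not a disjoint sum; with this reading your claim correctly specializes to the paper's Corollary~\ref{Coro-GL y P}, $GL(X\oplus Y,x_0)=GL(X,x_0)\oplus L(Y)$, since $(X\oplus Y,x_0)$ is precisely the wedge of $(X,x_0)$ and $\textbf{P}(Y)$.
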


\begin{coro}\label{Coro-GL y P}
Let $X$ and $Y$ be topological spaces, $x_0$ a point of $X$, and $X\oplus Y$ their topological sum. Then
$GL(X\oplus Y, x_0)=GL(X,x_{0})\oplus L(Y)$.
\end{coro}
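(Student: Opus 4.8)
The plan is to reduce the statement to the fact, established in Corollary \ref{natiso}, that $\textbf{GL}$ respects finite coproducts, together with the natural isomorphism $\textbf{L}\cong \textbf{GL}\circ \textbf{P}$. The first step is to recognize the pointed space $(X\oplus Y, x_0)$ as a finite coproduct in $\textbf{Tych}_*$. Recall that the coproduct of two objects $(Z, z_0)$ and $(W, w_0)$ of $\textbf{Tych}_*$ is their topological sum $Z\oplus W$ with the two distinguished points $z_0$ and $w_0$ identified to a single point, which becomes the distinguished point of the coproduct. I would apply this to $(X, x_0)$ and to $\textbf{P}(Y)=Y^+=(Y,a_Y)$: their coproduct is $X\oplus Y^+=X\oplus Y\oplus\{a_Y\}$ with $x_0$ and $a_Y$ identified. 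Since $a_Y$ is an added isolated point, gluing it to $x_0$ merely reabsorbs it into $x_0$ and restores the original topology, so the coproduct is precisely $(X\oplus Y, x_0)$.

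With this identification in hand, the second step is a direct computation. Applying $\textbf{GL}$ and using that it preserves finite coproducts gives
\[
GL(X\oplus Y, x_0)\;=\;GL(X,x_0)\oplus GL(Y^+, a_Y),
\]
where the right-hand $\oplus$ is the locally convex direct sum (the coproduct in $\textbf{HLocon}$). The natural isomorphism $\textbf{L}\cong \textbf{GL}\circ\textbf{P}$ then identifies $GL(Y^+, a_Y)=GL(\textbf{P}(Y))$ with $L(Y)$, yielding the claimed equality.

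The main obstacle I anticipate is the first step: verifying carefully that the coproduct in $\textbf{Tych}_*$ has the description above and that identifying the isolated point $a_Y$ of $Y^+$ with $x_0\in X$ genuinely returns the topological sum $X\oplus Y$ rather than some quotient that alters the topology near $x_0$. Because $a_Y$ is isolated and is glued to a single point, no nontrivial identifications occur among points of $X$ or of $Y$, and a neighborhood base at the merged point is obtained by combining neighborhood bases at $x_0$ and at $a_Y$; since the latter contributes only the singleton $\{a_Y\}$, the topology at $x_0$ is unchanged. Once this is settled the rest is a formal consequence of the adjunction.

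As an alternative I would keep in reserve a direct verification through the universal property of $GL$: define $\delta\from X\oplus Y\to GL(X,x_0)\oplus L(Y)$ by $x\mapsto(\delta_X(x),0)$ for $x\in X$ and $y\mapsto(0,\delta_Y(y))$ for $y\in Y$ (so $x_0\mapsto 0$), and for any continuous $f\from X\oplus Y\to E$ with $f(x_0)=0$ split it as $f|X$ and $f|Y$, extend each by the respective universal properties, and combine them additively on the direct sum. Continuity on the direct sum follows from continuity on each summand, and uniqueness from the fact that $\delta(X\oplus Y)$ spans the space. This route avoids the categorical bookkeeping at the cost of a slightly longer argument.
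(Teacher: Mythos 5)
Your proof is correct and is essentially the argument the paper intends: the statement is left unproved as an immediate consequence of Corollary \ref{natiso}, namely that $\textbf{GL}$ preserves finite coproducts (which in $\textbf{Tych}_*$ are wedge sums) together with $\textbf{L}\cong\textbf{GL}\circ\textbf{P}$, and your identification of $(X\oplus Y,x_0)$ with the coproduct of $(X,x_0)$ and $\textbf{P}(Y)=(Y,a_Y)$ is exactly the missing bookkeeping, correctly justified by the observation that gluing the isolated point $a_Y$ to $x_0$ does not alter the topology. Your reserve argument via the universal property of $GL$ is also valid, but the main route already matches the paper's derivation.
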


Let us show a more explicit relationship between $L(X)$ and $GL(X,x_0)$.
Consider the function $e_X\from X\to\reals$ such that $e_X(x)=1$ for all $x\in X$, and let 
$(e_X)_\#\from L(X)\to \reals$ be the unique linear mapping that extends $e_X$. Denote the kernel of 
$(e_X)_\#$ by $L^0(X)$. Observe that
\begin{equation*}
L^0(X)=\left\{ \displaystyle\sum_{i=1}^n\lambda_i\delta_{x_i}: n\in \mathbb{N},\quad \lambda_i\in \reals,
\quad x_i\in X, \quad 1\leq i\leq n, \quad\sum_{i=1}^n\lambda_i=0 \right\}.
\end{equation*}

We will say that a topological isomorphism $\varphi\from L(X)\to L(Y)$ is {\it special\/} if the composition 
$(e_Y)_\#\circ \varphi \from L(X)\to \reals$ is constant on $X$.

As shown in \cite{\refOkuneva}, if there is a topological isomorphism between $L(X)$ and $L(Y)$, then
there is always a special topological isomorphism between them. Moreover, the following statements are easily
derived from \cite[Theorem 3.7]\refOkuneva:

\begin{prop}\label{Prop-especial}
Given a topological isomorphism $\psi\from L(X)\to L(Y)$, there is always a topological isomorphism 
$\varphi\from L(X)\to L(Y)$ such that $(e_Y)_\#\circ \varphi =(e_X)_\#$.
\end{prop}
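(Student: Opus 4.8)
The plan is to bootstrap from the existence of special isomorphisms, which is the content cited just above from \cite{\refOkuneva}, and then rescale to promote the condition \emph{constant on $X$} to the exact equality $(e_Y)_\#\circ\varphi=(e_X)_\#$. First I would invoke that cited fact: since $\psi$ witnesses that $L(X)$ and $L(Y)$ are topologically isomorphic, there is a special topological isomorphism $\varphi_0\from L(X)\to L(Y)$, i.e. one for which $(e_Y)_\#\circ\varphi_0$ is constant on $X$, say equal to some $c\in\reals$ on every $\delta_x$, $x\in X$.

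Next I would upgrade this pointwise-on-$X$ statement to an identity of functionals on all of $L(X)$. Both $(e_Y)_\#\circ\varphi_0$ and $(e_X)_\#$ are continuous linear functionals on $L(X)$, and by Theorem \ref{Fundamental}(2) the set $\delta_X(X)$ is a Hamel base of $L(X)$. Since $(e_X)_\#(\delta_x)=e_X(x)=1$ and $(e_Y)_\#\circ\varphi_0(\delta_x)=c$ for all $x\in X$, the two linear maps $(e_Y)_\#\circ\varphi_0$ and $c\,(e_X)_\#$ agree on a Hamel base and are therefore equal on $L(X)$.

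The one genuine point to verify --- and the only place the argument could go wrong --- is that $c\neq 0$. If $c$ were $0$, the previous identity would give $(e_Y)_\#\circ\varphi_0=0$, so that $\varphi_0(L(X))\subseteq\ker(e_Y)_\#=L^0(Y)$. But $(e_Y)_\#$ is surjective onto $\reals$ (it sends each $\delta_y$ to $1$), hence $L^0(Y)$ is a proper subspace of $L(Y)$, contradicting the surjectivity of the isomorphism $\varphi_0$. Thus $c\neq 0$.

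Finally, I would rescale. Setting $\varphi=c^{-1}\varphi_0$ --- the composition of $\varphi_0$ with the homothety $v\mapsto c^{-1}v$ of $L(Y)$, which is a topological isomorphism because $c\neq 0$ --- yields a topological isomorphism $\varphi\from L(X)\to L(Y)$ with $(e_Y)_\#\circ\varphi=c^{-1}\big((e_Y)_\#\circ\varphi_0\big)=c^{-1}\,c\,(e_X)_\#=(e_X)_\#$, as required. The whole argument is short once the special-isomorphism result is in hand; the substance is entirely carried by that cited theorem, and the contribution here is only the normalization that pins the constant to $1$.
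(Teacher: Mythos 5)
Your proof is correct, and it is essentially the derivation the paper intends: the paper gives no explicit argument, merely asserting that the proposition is ``easily derived'' from the cited existence of a special isomorphism, and your normalization supplies exactly the missing details. The three steps --- upgrading ``constant on $X$'' to the identity $(e_Y)_\#\circ\varphi_0=c\,(e_X)_\#$ via the Hamel base $\delta_X(X)$, ruling out $c=0$ by surjectivity of $\varphi_0$, and rescaling by $c^{-1}$ --- are all sound.
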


\begin{prop}
Let $x_0$ be a point of $X$. The spaces $L^0(X)$ and $GL(X,x_0)$ are topologically isomorphic.
\end{prop}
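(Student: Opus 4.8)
The plan is to exhibit an explicit pair of mutually inverse continuous linear maps, built entirely from the two universal properties at hand. First I would pin down the algebra. I claim that the set $\{\delta_x-\delta_{x_0}: x\in X\setminus\{x_0\}\}$ is a Hamel base of $L^0(X)$. Each such vector lies in $L^0(X)$ since its coefficients sum to $1-1=0$; linear independence follows because $\sum_x\lambda_x(\delta_x-\delta_{x_0})=\sum_x\lambda_x\delta_x-(\sum_x\lambda_x)\delta_{x_0}=0$ forces all $\lambda_x=0$ by part (2) of Theorem \ref{Fundamental}; and they span $L^0(X)$ because any $\sum_i\lambda_i\delta_{x_i}$ with $\sum_i\lambda_i=0$ equals $\sum_i\lambda_i(\delta_{x_i}-\delta_{x_0})$. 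This base matches the Graev base $\delta_X(X\setminus\{x_0\})$ of $GL(X,x_0)$, so the natural candidate is the linear bijection sending the Graev generator of $x$ to $\delta_x-\delta_{x_0}$.

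Next I would upgrade this to a topological statement. For the forward direction, consider $g\from X\to L^0(X)$ given by $g(x)=\delta_x-\delta_{x_0}$. It is continuous (the Dirac embedding is continuous and translation by $-\delta_{x_0}$ is continuous in a topological vector space), it takes values in the locally convex space $L^0(X)$ with its subspace topology, and $g(x_0)=0$. By the universal property of $GL(X,x_0)$ there is a unique continuous linear map $g_\#\from GL(X,x_0)\to L^0(X)$ with $g=g_\#\circ\delta_X$. For the reverse direction, let $j\from X\to GL(X,x_0)$ denote the Graev Dirac embedding, so $j(x_0)=0$. By the universal property of $L(X)$, $j$ extends to a continuous linear map $h\from L(X)\to GL(X,x_0)$ with $h(\delta_x)=j(x)$ for every $x$, and in particular $h(\delta_{x_0})=0$. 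Restricting $h$ to the subspace $L^0(X)$ gives a continuous linear map $h|_{L^0(X)}\from L^0(X)\to GL(X,x_0)$.

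Finally I would check that $g_\#$ and $h|_{L^0(X)}$ are mutually inverse, which suffices since both are continuous and linear. On the Graev generators, $g_\#(j(x))=\delta_x-\delta_{x_0}$ and then $h(\delta_x-\delta_{x_0})=j(x)-0=j(x)$, so $h\circ g_\#=\id_{GL(X,x_0)}$; conversely, on the base elements $\delta_x-\delta_{x_0}$ of $L^0(X)$ we get $h(\delta_x-\delta_{x_0})=j(x)$ and $g_\#(j(x))=\delta_x-\delta_{x_0}$, so $g_\#\circ h|_{L^0(X)}=\id_{L^0(X)}$; linearity extends both identities from the Hamel bases to the whole spaces. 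I expect the only genuinely delicate point to be the topological (rather than purely algebraic) content: one must invoke the universal properties precisely so that \emph{each} direction is continuous, and one must note that $L^0(X)$, being the kernel of the continuous functional $(e_X)_\#$, is a locally convex space in its subspace topology, so that corestricting $g$ to $L^0(X)$ and restricting $h$ to $L^0(X)$ are both legitimate continuous operations. Once that is granted, the two maps witness a topological isomorphism.
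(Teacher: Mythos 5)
Your proof is correct, and it takes a genuinely different route from the paper. The paper offers no argument at all for this proposition: it is presented, together with Proposition \ref{Prop-especial}, as ``easily derived'' from an external result (Theorem 3.7 of the cited Okunev paper on special isomorphisms of free locally convex spaces). Your argument, by contrast, is entirely self-contained: you identify the Hamel base $\{\delta_x-\delta_{x_0}: x\in X\setminus\{x_0\}\}$ of $L^0(X)$, and then invoke each universal property exactly once --- the Graev property of $GL(X,x_0)$ to get the continuous linear map $g_\#$ into $L^0(X)$ (legitimate because $L^0(X)$, as a linear subspace of a Hausdorff locally convex space, is itself Hausdorff locally convex), and the Markov property of $L(X)$ to get $h$, whose restriction to $L^0(X)$ is the inverse; the two compositions are the identity on the respective Hamel bases, hence everywhere by linearity. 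What your approach buys is that the isomorphism is exhibited explicitly ($j(x)\mapsto \delta_x-\delta_{x_0}$), it uses only Theorem \ref{Fundamental} and the definitions in Section 2, and it makes transparent why continuity holds in \emph{both} directions --- the point that a purely algebraic base-matching argument would miss. What the paper's citation buys is brevity and the placement of the result alongside Proposition \ref{Prop-especial} as facets of the same theorem on special isomorphisms, which is the tool the paper actually reuses later (e.g.\ in the proof of Corollary \ref{CoroMarkovGraev}, where a special isomorphism is restricted to $L^0(X)$). Your proof could serve as a drop-in replacement that removes the external dependency for this particular proposition.
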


This proposition reflects the fact that the free locally convex space in the sense of Graev does not depend 
(up to a topological isomorphism) on the choice of the distinguished point. For this reason, in what follows
the free locally convex space in the sense of Graev will be denoted just by $GL(X)$.

\begin{coro}\label{CoroMarkovGraev}
Let $X$ and $Y$ be topological spaces. The spaces $L(X)$ and $L(Y)$ are topologically isomorphic
if and only if $GL(X)$ and $GL(Y)$ are topologically isomorphic.
\end{coro}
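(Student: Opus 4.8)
The plan is to prove the two implications separately, using in both cases the augmentation map $(e_X)_\#\from L(X)\to\reals$, whose kernel is $L^0(X)$, together with the preceding proposition identifying $L^0(X)$ with $GL(X)$. First I would record a convenient splitting of $L(X)$. Fixing any point $x_1\in X$, observe that $(e_X)_\#(\delta_{x_1})=1$, so the line $\reals\delta_{x_1}$ meets $L^0(X)$ only at $0$, and the assignment $v\mapsto v-(e_X)_\#(v)\delta_{x_1}$ is a continuous linear projection of $L(X)$ onto $L^0(X)$ with continuous complementary projection $v\mapsto (e_X)_\#(v)\delta_{x_1}$ (both are continuous because $(e_X)_\#$ is). Hence $L(X)$ is the topological direct sum $L^0(X)\oplus\reals\delta_{x_1}$, and combining this with the previous proposition yields the identity $L(X)\cong GL(X)\oplus\reals$ valid for every nonempty $X$.

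With this in hand the converse implication is immediate: if $\tau\from GL(X)\to GL(Y)$ is a topological isomorphism, then $\tau\oplus\id_\reals$ is a topological isomorphism of $GL(X)\oplus\reals$ onto $GL(Y)\oplus\reals$, and the displayed identity gives $L(X)\cong GL(X)\oplus\reals\cong GL(Y)\oplus\reals\cong L(Y)$.

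For the direct implication I cannot simply cancel the factor $\reals$, so instead I would exploit Proposition \ref{Prop-especial}. Starting from an arbitrary topological isomorphism $\psi\from L(X)\to L(Y)$, that proposition supplies a topological isomorphism $\varphi\from L(X)\to L(Y)$ with $(e_Y)_\#\circ\varphi=(e_X)_\#$. This compatibility forces $\varphi$ to send $\ker(e_X)_\#$ into $\ker(e_Y)_\#$; applying the same reasoning to $\varphi^{-1}$, which satisfies $(e_X)_\#\circ\varphi^{-1}=(e_Y)_\#$, shows $\varphi(L^0(X))=L^0(Y)$. Thus $\varphi$ restricts to a topological isomorphism $L^0(X)\to L^0(Y)$, and invoking $L^0(X)\cong GL(X)$ and $L^0(Y)\cong GL(Y)$ once more gives $GL(X)\cong GL(Y)$.

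The hard part will be precisely this last implication: a bare isomorphism between $L(X)$ and $L(Y)$ need not respect the augmentation kernels, so the whole argument hinges on replacing it by a special one via Proposition \ref{Prop-especial}; once that is done, the restriction to the kernels and the passage back to $GL$ are routine. (The degenerate cases where $X$ or $Y$ is empty or a single point can be checked directly, since then $GL$ is trivial and $L$ is $\reals$ or $\{0\}$.)
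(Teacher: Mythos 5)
Your proposal is correct and follows essentially the same route as the paper: the forward implication via Proposition \ref{Prop-especial} (replacing an arbitrary isomorphism by a special one and restricting it to the augmentation kernels $L^0(X)\cong GL(X)$), and the converse via the decomposition $L(X)\cong GL(X)\oplus\reals$. The only difference is that you spell out details the paper leaves implicit, namely the explicit continuous projections witnessing the topological direct sum and the verification that a special isomorphism carries $L^0(X)$ onto $L^0(Y)$.
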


\begin{proof}
If the spaces $L(X)$ and $L(Y)$ are topologically isomorphic, then by Proposition \ref{Prop-especial}
there is a special topological isomorphism $\varphi\from L(X)\rightarrow L(Y)$ such that 
$(e_Y)_\#\circ \varphi = (e_X)_\#$. It follows that $\varphi|{L^0(X)}\from L^0(X)\to L^0(Y)$ is a topological
isomorphism, so the spaces $GL(X)$ and $GL(Y)$ are topologically isomorphic. On the other hand, if the spaces
$GL(X)$ and $GL(Y)$ are topologically isomorphic, then $L(X)=GL(X)\oplus\reals$ and 
$L(Y)=GL(Y)\oplus \reals$ are also topologically isomorphic.
\end{proof}

Given the close relationship between spaces $L(X)$ and $GL(X)$ we can define the relation of $L$-equivalence as follows: 
the spaces $X$ and $Y$ are called {\it $L$-equivalent\/} ( $X\stackrel{\mathit{L}}{\sim} Y$ ) if their free locally convex
spaces $L(X)$ and $L(Y)$ are topologically isomorphic. Furthermore, following \cite\refOkunevb, we can extend this relation
to continuous mappings between topological spaces. We say that two continuous mappings $f\from X\to Y$ and 
$g\from Z\to T$ are {\it $L$-equivalent\/} ($f\stackrel{\mathit{L}}{\sim} g$) if there are topological isomorphisms 
$\varphi \from L(X)\to L(Z)$ and $\psi\from L(Y)\to L(T)$ such that $\psi \circ f_\#=g_\#\circ \varphi$.
Clearly, these are equivalence relations. Likewise, any topological property of spaces or mappings that is preserved
by the relation of $L$-equivalence will be called {\it $L$-invariant\/}. It is worth noting that the $L$-equivalence
of the the identity mappings $\id_X\from X\to X$ and $\id_Y\from Y\to Y$ is the same as the $L$-equivalence of the spaces
$X$ and $Y$.

In a similar order of ideas, we can define the free weak topological vector space $L_p(X)$ over the topological space
$X$ as a pair $(\delta_X, L_p(X))$ formed by a continuous injection $\delta_X\from X\to L_p(X)$ and a weak topological
vector space $L_p(X)$ so that for every continuous function $f\from X\to E$ where $E$ is a weak topological vector space,
there is a unique continuous linear mapping $f_\#\from L_p(X)\to E$ such that $f=f_\#\circ \delta_X$. In addition,
Theorem \ref{Fundamental}, as well as the rest of subsequent statements, remain valid for these new spaces.

Naturally, this leads us to establish the concept of spaces and functions $L_p$-equivalent, and the notion of $L_p$-invariant
properties. It should be noted that the concept of $L_p$-equivalence is often linked to the functor $\mathbf C_p$, in which
case we say that two spaces $X$ and $Y$ are $\ell$-equivalent if its spaces of continuous real functions $C_p(X)$ and $
C_p(Y)$ are topologically isomorphic. This should not worry us, since the spaces $C_p(X)$ and $L_p(X)$ are in duality,
so $C_p(X)$ is topologically isomorphic to $C_p(Y)$ if and only if $L_p(X)$ is topologically isomorphic to $L_p(Y)$.
Therefore, following the notation already established, the relation of $L_p$-equivalence is the same as the relation of
$\ell$-equivalence, and the properties that are $L_p$-invariant are $\ell$-invariant.

Finally, we will briefly describe the relation between the topologies of the spaces $L(X)$ and $L_p(X)$. First,
from the definitions of these objects, it is easy to see that the identity $(\id_X)_\#\from L(X)\to L_p(X)$ is
a continuous linear mapping, accordingly, the underlying sets of the spaces $L(X)$ and $L_p(X)$ are the same, and it is
also clear that the topology of $L_p(X)$ is the $*$-weak topology of $L(X)$. Second, there is a relatively simple way
to describe its topology: since the spaces $L(X)$ and $C(X)$ are in duality (algebraic), and like any locally convex
topology over a space $E$ it is the topology of uniform convergence on the equicontinuous sets of its topological dual
$E'$, the topology of $L(X)$ it is the topology of uniform convergence on the equicontinuous pointwise bounded sets of
 $C(X)$ \cite{\refFlood}. Similarly, since that the topology of $L_p(X)$ it is weak, and since we can embed
$L_p(X)$ in $C_p(C_p(X))$, whose topology is also weak, we get that the topology of $L_p(X)$ is the topology inherited
from $C_p(C_p(X))$. Thus, a local neighborhood base of zero in $L(X)$ ($L_p(X)$) is is the family of sets of the form
\begin{equation}
\nonumber V[0,F, \varepsilon]=\left\{ \alpha\in L(X) : |\alpha(f)|= |f_\#(\alpha)|<\varepsilon, 
\quad f\in F \right\}.
\end{equation}
where $F\subset C(X)$ is an equicontinuous pointwise bounded set (respectively, a finite set) and $\varepsilon>0$,

\section{$L$-retracts}

As mentioned at the beginning, we will see what useful properties have the $\ell$-embedded sets and then we will try to 
translate them into the language of free locally convex spaces. We start with a definition: let $X$ be a topological space
and $Y$ a subset of $X$. An {\it extender} is a mapping $\phi\from C(Y)\to C(X)$ such that $\phi(f)|Y=f$ for every
$f\in C(Y)$. An extender may be linear or not, but what really matters to us is the situation in which it is continuous.
If there is a continuous (linear continuous) extender $\phi\from C_p(Y)\to C_p(X)$, we will say that $Y$ is
{\it $t$-embedded\/} ({\it $\ell$-embedded\/) in $X$}. A basic fact about $t$-embedded sets is that they always turn 
out to be closed. Clearly, every $\ell$-embedded set is also $t$-embedded, and it is easy to verify that $X$ is always 
$\ell$-embedded in $L_p(X)$. The following statement is also easy to prove.

\begin{prop}\label{Propl-encajaretract}
Let $Y$ be a subspace of $X$. The following statements are equivalent:
\begin{enumerate}
\item $Y$ is $\ell$-embedded in $X$;
\item There is a linear and continuous retraction $r\from L_p(X)\to L_p(Y)$;
\item There is a continuous functions $f\from X\to L_p(A)$ such that $f|A=\delta_{A}$;
\item Every continuous function from $Y$ to a weak topological linear space $E$ extends to a continuous function from
$X$ to $E$.
\end{enumerate}
\end{prop}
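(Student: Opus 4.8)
The plan is to prove the four conditions equivalent by establishing three pairwise equivalences, each of which isolates a single idea: the $C_p$--$L_p$ duality for $(1)\Leftrightarrow(2)$, the universal property of $L_p$ for $(2)\Leftrightarrow(3)$, and again the universal property together with the test case $g=\delta_Y$ for $(3)\Leftrightarrow(4)$. Throughout I would write $\langle\alpha,f\rangle=f_\#(\alpha)=\alpha(f)$ for the canonical bilinear pairing that puts $L_p(X)$ and $C_p(X)$ in duality, so that $\langle\delta_x,f\rangle=f(x)$; recall that under this pairing the topology of $L_p(X)$ is the weak topology $\sigma(L(X),C(X))$ and that of $C_p(X)$ is $\sigma(C(X),L(X))$, so that the adjoint (transpose) of a continuous linear map between such weak spaces is again a continuous linear map between the corresponding weak duals.

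For $(1)\Leftrightarrow(2)$ I would pass to adjoints. Given a continuous linear extender $\phi\from C_p(Y)\to C_p(X)$, the duality provides an adjoint $\phi^{*}\from L_p(X)\to L_p(Y)$ determined by $\langle\phi^{*}(\alpha),g\rangle=\langle\alpha,\phi(g)\rangle$, and it is weakly continuous because $\phi$ is. Evaluating at a Dirac measure, $\langle\phi^{*}(\delta_y),g\rangle=\langle\delta_y,\phi(g)\rangle=\phi(g)(y)=g(y)=\langle\delta_y,g\rangle$ for every $g\in C(Y)$ and $y\in Y$, so $\phi^{*}(\delta_y)=\delta_y$; since $L_p(Y)$ is the linear span of $\{\delta_y: y\in Y\}$, this makes $\phi^{*}$ a retraction, which is $(2)$. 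Conversely, the adjoint of a continuous linear retraction $r\from L_p(X)\to L_p(Y)$ is a continuous linear map $r^{*}\from C_p(Y)\to C_p(X)$, and the same computation, using $r(\delta_y)=\delta_y$, shows $r^{*}(g)|Y=g$, so $r^{*}$ is an extender, giving $(1)$.

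For $(2)\Leftrightarrow(3)$ I would use only the universal property. If $r\from L_p(X)\to L_p(Y)$ is a continuous linear retraction, then its restriction $f=r|X$ to $X\subset L_p(X)$ is continuous and $f(y)=r(\delta_y)=\delta_y$, which is exactly $(3)$ (with target $L_p(Y)$). Conversely, given a continuous $f\from X\to L_p(Y)$ with $f|Y=\delta_Y$, the universal property of $L_p(X)$ yields a unique continuous linear $f_\#\from L_p(X)\to L_p(Y)$ with $f_\#\circ\delta_X=f$; then $f_\#(\delta_y)=\delta_y$ for $y\in Y$, so $f_\#$ fixes the span $L_p(Y)$ and is the desired retraction. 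Finally, $(3)\Rightarrow(4)$ follows by composition: for a continuous $g\from Y\to E$ into a weak topological linear space the induced map $g_\#\from L_p(Y)\to E$ is continuous and linear, so $g_\#\circ f\from X\to E$ is a continuous extension of $g$ (one checks on $Y$ via $f|Y=\delta_Y$ and $g_\#\circ\delta_Y=g$); and $(4)\Rightarrow(3)$ is the special case obtained by applying $(4)$ to the continuous map $\delta_Y\from Y\to L_p(Y)$, whose extension is precisely the function demanded in $(3)$.

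The routine verifications are the evaluations on Dirac measures and the bookkeeping of restrictions and compositions; the one point that genuinely needs the structure theory, and which I regard as the crux, is the continuity claim in $(1)\Leftrightarrow(2)$: one must invoke that $L_p(X)$ and $C_p(X)$ are honest weak duals of one another (as recalled just before the statement) so that forming transposes stays inside these spaces and preserves weak continuity. Everything else is a formal consequence of the universal property of the free weak topological vector space $L_p$.
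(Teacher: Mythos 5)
Your proof is correct, and it is essentially the argument the paper has in mind: the paper states this proposition without proof (``also easy to prove''), but your adjoint/transpose technique is exactly the one the paper spells out for the $L$-analogue in Theorem~\ref{Teopreservadecuad}, where $\phi(f)=(f_\#\circ r)|X$ and $q(x)=\delta_x\circ\phi$ are precisely the transposes you describe, written pointwise. You also correctly read the $A$ in item (3) as a typo for $Y$, and your observation that a retraction $r\from L_p(X)\to L_p(Y)$ automatically forces the canonical map $L_p(Y)\to L_p(X)$ to be an embedding (continuous injection with continuous left inverse) quietly disposes of the only delicate identification in the statement.
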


The previous proposition tells us that the $\ell$-embedded sets are simply the $L_p$-retracts, however, with the purpose of
not multiplying the notation, we will continue to use the term $\ell$-embedded set. On the other hand, with respect to
free locally convex spaces, if $Y$ is any subspace of $X$, it is not true that $L(Y)$ is necessarily a locally convex
subspace of $L(X)$, even if $Y$ is closed in $X$. Therefore, if $Y$ is a subspace of $X$ such that $L(Y)$ is a locally convex
subspace of $L(X)$, we will say that $Y$ is {\it $L$-embedded\/} in $X$.

In comparison, if $Y\subset X$, we have that $Y$ is $P$-embedded in $X$ if every continuous pseudometric on $Y$ can be
extended to a continuous pseudometric on $X$. The concept of a $P$-embedded set has several characterizations; the
one given by Yamazaki \cite[Theorem 3.1]\refYamazaki \ it is the one used in the proof of the following statement.

\begin{prop}
Let $Y$ be a subspace of $X$. The following statements are equivalents:
\begin{itemize}
\item $Y$ is $L$-embedded in $X$;
\item Any equicontinuous pointwise bounded subset of $C(Y)$ can be extended to an equicontinuous pointwise bounded
subset of $C(X)$;
\item $Y$ is $P$-embedded in $X$.
\end{itemize}
\end{prop}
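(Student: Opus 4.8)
The plan is to run the equivalences through the description, recalled just above, of a base of neighbourhoods of $0$ in $L(X)$ by the sets $V[0,F,\varepsilon]$ with $F\subset C(X)$ equicontinuous and pointwise bounded, together with the classical correspondence between such families and continuous pseudometrics. To each equicontinuous pointwise bounded $F\subset C(X)$ I would associate the pseudometric $\rho_F(x,x')=\sup_{f\in F}|f(x)-f(x')|$; equicontinuity makes $\rho_F$ continuous, and conversely, for a continuous pseudometric $\rho$ on $X$ and a base point $x_0$, the family $F_\rho$ of all $\rho$-nonexpansive functions vanishing at $x_0$ is equicontinuous and pointwise bounded and satisfies $\rho_{F_\rho}=\rho$. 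The second tool is the McShane--Whitney extension: a function $g$ that is $1$-Lipschitz for a continuous pseudometric $\sigma$ on a subspace $Y$ extends, via $x\mapsto\inf\{g(y)+\sigma(x,y):y\in Y\}$, to a $\sigma$-Lipschitz function on the whole space.

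First I would record the trivial direction. The inclusion $Y\hookrightarrow X$ induces a continuous linear injection $L(Y)\to L(X)$, so the free topology of $L(Y)$ is always finer than the one it inherits from $L(X)$; hence $Y$ is $L$-embedded precisely when every basic neighbourhood $V^Y[0,G,\varepsilon]$ of $L(Y)$ contains the trace on $L(Y)$ of some neighbourhood of $0$ in $L(X)$. Now if the second condition holds and $G\subset C(Y)$ is equicontinuous and pointwise bounded, extend it to such an $F\subset C(X)$; since $f_\#$ and $(f|Y)_\#$ agree on vectors of $L(Y)$, one gets $V^X[0,F,\varepsilon]\cap L(Y)\subseteq V^Y[0,G,\varepsilon]$, which yields the first condition.

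For the converse, assume $Y$ is $L$-embedded and let $G\subset C(Y)$ be equicontinuous and pointwise bounded. Choosing $F$ and $\delta$ with $V^X[0,F,\delta]\cap L(Y)\subseteq V^Y[0,G,1]$ and testing this inclusion on the two-point vectors $t(\delta_y-\delta_{y'})$, a scaling argument produces a constant $C$ with $\rho_G\le C\,\rho_F$ on $Y\times Y$. Put $\sigma=C\rho_F$, a continuous pseudometric on $X$. Every $g\in G$ is $1$-Lipschitz for $\rho_G$, hence for $\sigma|Y$, so its McShane extension $\tilde g$ is $\sigma$-Lipschitz on $X$ and restricts to $g$; the family $\{\tilde g\}$ is equicontinuous (uniformly $\sigma$-Lipschitz) and pointwise bounded, the latter from $|\tilde g(x)|\le|g(y_0)|+\sigma(x,y_0)$ and the pointwise boundedness of $G$ at the base point $y_0$. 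This gives the second condition and closes the equivalence of the first two.

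Finally, the equivalence with $P$-embedding rests on the same correspondence, and here I would lean on Yamazaki's Theorem 3.1. If the second condition holds and $\rho$ is a continuous pseudometric on $Y$, apply it to $F_\rho$, select one extension per function to obtain an equicontinuous pointwise bounded family on $X$ whose restriction to $Y$ is again $F_\rho$; its associated pseudometric is then a continuous extension of $\rho_{F_\rho}=\rho$, so $Y$ is $P$-embedded. Conversely, if $Y$ is $P$-embedded and $G\subset C(Y)$ is equicontinuous and pointwise bounded, extend the continuous pseudometric $\rho_G$ to a continuous pseudometric $\rho$ on $X$ and McShane-extend each $g\in G$ with respect to $\rho$, exactly as before. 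The main obstacle is the hard half of the first equivalence: converting the purely topological statement that $L(Y)$ is a subspace of $L(X)$ into the quantitative domination $\rho_G\le C\rho_F$, and then verifying that the resulting McShane extensions are simultaneously equicontinuous and pointwise bounded. The bookkeeping needed to make the extended pseudometric restrict \emph{exactly} to $\rho$, rather than merely dominate it, in the $P$-embedding step is the other point that demands care.
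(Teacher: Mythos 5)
Your proof is correct, but it takes a genuinely different route from the paper. The paper offers essentially no written argument: it obtains the equivalence of the second and third conditions by citing Yamazaki's Theorem 3.1 (which states precisely that $Y$ is $P$-embedded in $X$ iff every pointwise bounded equicontinuous collection on $Y$ extends to one on $X$), and the equivalence with $L$-embeddedness then rests on Flood's description of the topology of $L(X)$ as uniform convergence on equicontinuous pointwise bounded subsets of $C(X)$. You use that same neighbourhood-base description for the first equivalence --- including the key scaling argument on vectors $t(\delta_y-\delta_{y'})$ that converts $V^X[0,F,\delta]\cap L(Y)\subseteq V^Y[0,G,1]$ into the domination $\rho_G\le\frac{1}{\delta}\rho_F$ --- but then you prove the Yamazaki equivalence from scratch, via the correspondence $\rho\leftrightarrow F_\rho$ between continuous pseudometrics and uniformly Lipschitz families, together with McShane--Whitney extension. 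In effect you have reproved the cited theorem rather than invoked it; what this buys is a self-contained and quantitative argument, while the paper's approach buys brevity at the cost of deferring all the work to the reference. Two small remarks: your final worry about the extended pseudometric only \emph{dominating} $\rho$ is unfounded, since $\rho_{\widetilde F}(y,y')=\sup_f|\widetilde f(y)-\widetilde f(y')|$ evaluated at points of $Y$ depends only on the restrictions $\widetilde f|Y=f$, so the restriction is automatically exact (as you in fact conclude); and the pointwise boundedness of the McShane extensions, via $|\widetilde g(x)|\le|g(y_0)|+\sigma(x,y_0)$ and $\sup_{g\in G}|g(y_0)|<\infty$, silently uses a base point $y_0\in Y$, so the empty subspace should be set aside as a trivial case.
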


Taking into account that the concept of an $L$-embedded set is related to simultaneous extension of equicontinuous
pointwise bounded sets, we can ask, of course, what relationship exists between the notions of an $\ell$-embedded set and
an $L$-embedded set.

\begin{ex}
{\sl An $L$-embedded sets need not be $\ell$-embedded.}

\smallskip
Consider the space $X=\omega_1+1$ with the order topology, and let $Y$ be the dense subspace $\omega_1$. Recall that $Y$
is a pseudocompact non-compact space, and that $X$ is a the Stone-\v Cech compactification of $Y$. Since the square of $Y$ is
pseudocompact, $X^2$ is the \v Cech-Stone compactification of $Y^2$, so $Y$ is $P$-embedded in $X$, that is, 
$Y$ is $L$-embedded in $X$. Since $Y$ it is not a closed set in $X$, $Y$ cannot be $\ell$-embedded. 
\end{ex}

\begin{ex}\label{l no L}
{\sl An $\ell$-embedded set need not be $L$-embedded.}

\smallskip
Let $Y$ be an uncountable discrete space, and let $X=L_p(Y)$. Then $Y$ is $\ell$-embedded in $X$,
and $X$ has the Souslin property. By \cite[Theorem 1.2]\refHoshina{},  $Y$ can not be $P$-embedded in
$X$, and hence $Y$ is not $L$-embedded in $X$.
\end{ex}

As we see, the $\ell$-embedded sets and free locally convex spaces do not have a direct relationship; this is 
another reason to study the $L$-retracts, because these have all the good properties of the $\ell$-embedded and 
$L$-embedded sets. As we will see, this combination of concepts improves their properties.

\begin{prop}\label{PropFL-encajaimpliLlB-encaja}
Every $L$-retract is an $L$-embedded and $\ell$-embedded set. In particular, every $L$-retract is a closed set.
\end{prop}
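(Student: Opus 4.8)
The plan is to reduce the proposition to a single nontrivial implication and then settle that implication by a weak-continuity argument. Note first that the $L$-embedded conclusion is immediate: by the very definition of an $L$-retract, $A$ being an $L$-retract of $X$ already includes the requirement that $A$ be $L$-embedded in $X$. For the final sentence, I would invoke the facts recorded at the start of this section, namely that every $\ell$-embedded set is $t$-embedded and every $t$-embedded set is closed. Thus the whole statement reduces to showing that an $L$-retract is $\ell$-embedded.

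To produce the $\ell$-embedded conclusion I would apply the characterization in Proposition~\ref{Propl-encajaretract}, in the form: $A$ is $\ell$-embedded in $X$ as soon as there is a continuous linear retraction $L_p(X)\to L_p(A)$. By definition, an $L$-retract supplies a continuous linear retraction $r_\#\from L(X)\to L(A)$, where $L(A)$ is regarded as a subspace of $L(X)$ (its subspace topology agreeing with its free topology because $A$ is $L$-embedded). The decisive point is the identification, recalled in Section~2, of $L_p(X)$ with the underlying vector space of $L(X)$ equipped with its weak topology; combined with the fact that the continuous linear functionals on $L(X)$ are exactly the elements of $C(X)$ (the corollary following Theorem~\ref{Fundamental}, taken with $E=\reals$), this says that the topology of $L_p(X)$ is the weak topology of the dual pair $\langle L(X),C(X)\rangle$, and similarly for $A$.

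The crux, then, is the standard principle that a linear map continuous for the free topologies is automatically continuous for the associated weak topologies. Concretely, for each $g\in C(A)=L(A)'$ the composite $g\circ r_\#$ is a continuous linear functional on $L(X)$, hence belongs to $C(X)=L(X)'$; since the weak topology of $L_p(A)$ is generated by the seminorms $\alpha\mapsto|g(\alpha)|$ with $g\in C(A)$, this shows $r_\#$ is continuous as a map $L_p(X)\to L_p(A)$. As it remains linear and restricts to the identity on $L(A)$, it is the desired continuous linear retraction, and Proposition~\ref{Propl-encajaretract} gives that $A$ is $\ell$-embedded; closedness then follows as above. I expect the only real obstacle to be conceptual rather than computational: one must set up the two dual pairs correctly and confirm that $L(X)'=C(X)$ and $L(A)'=C(A)$, after which the transpose computation is routine.
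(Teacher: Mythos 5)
Your proposal is correct, but a comparison with ``the paper's proof'' is delicate here: the paper states Proposition~\ref{PropFL-encajaimpliLlB-encaja} with no proof at all, treating it as immediate, and the closest explicit argument it contains is the first half of the proof of Theorem~\ref{Teopreservadecuad}. Your decomposition is the right one: $L$-embeddedness is part of the paper's definition of an $L$-retract, closedness follows from the stated facts about $t$-embedded sets, and the only substantive point is $\ell$-embeddedness. For that point you and the paper take dual formulations of the same transpose idea. The paper works on the $C_p$ side: from the $L$-retraction $r\from L(X)\to L(Y)$ it builds the extender $\phi(f)=(f_\#\circ r)|X$, and a continuous linear extender $C_p(Y)\to C_p(X)$ is precisely the definition of $\ell$-embedded (no appeal to Proposition~\ref{Propl-encajaretract} needed). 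You work on the $L_p$ side: the corollary after Theorem~\ref{Fundamental} with $E=\reals$ identifies $L(X)'$ with $C(X)$, so a continuous linear map between free spaces is automatically weak--weak continuous, giving a continuous linear retraction $L_p(X)\to L_p(Y)$, and then item~2 of Proposition~\ref{Propl-encajaretract} finishes. The two arguments are adjoint to one another and carry the same mathematical content; yours has the small advantage of quoting a standard duality principle (continuity implies weak continuity) rather than verifying continuity of the extender by hand, while the paper's version has the advantage that it simultaneously sets up the stronger statement of Theorem~\ref{Teopreservadecuad} (preservation of equicontinuous pointwise bounded sets), of which this proposition is then a byproduct. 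One point worth making explicit in your write-up: the subspace topology that $L(Y)$ inherits from $L(X)$ plays no real role in the weak-continuity step; what you use is only that $r_\#$ is continuous into $L(Y)$ with its free topology, which is guaranteed because $Y$ is $L$-embedded, exactly as you noted parenthetically.
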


We still do not know if the inverse of the previous proposition holds, that is, whether every $L$-embedded
and $\ell$-embedded set is an $L$-retract. We only can guarantee the following.

\begin{theorem}\label{Teopreservadecuad}
Let $Y$ be a subspace of $X$. $Y$ is an $L$-retract of $X$ if and only if there is a continuous linear extender 
$\phi\from C_p(Y)\to C_{p}(X)$ such that if $B\subset C(Y)$ is an equicontinuous pointwise bounded set, 
then $\phi(A)$ also is an equicontinuous pointwise bounded set.
\end{theorem}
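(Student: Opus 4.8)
The plan is to reduce the statement to a duality computation between the pairings $\langle L(X), C(X)\rangle$ and $\langle L(Y), C(Y)\rangle$, using that, by the corollary following Theorem \ref{Fundamental}, a linear functional on $L(X)$ is continuous exactly when its restriction to $X$ is continuous; hence the topological dual of $L(X)$ (and of $L_p(X)$) is $C(X)$, with pairing $\langle \alpha, f\rangle = f_\#(\alpha)$. I would first observe that $Y$ being an $L$-retract is equivalent to the existence of a single continuous linear map $\rho\from L(X)\to L(Y)$ with $\rho|Y=\delta_Y$. Indeed, the canonical map $\iota\from L(Y)\to L(X)$ induced by the inclusion $Y\hookrightarrow X$ is always continuous and injective, and if some continuous linear $\rho$ satisfies $\rho\circ\iota=\id_{L(Y)}$, then the inverse of $\iota$ on its image coincides with the restriction of $\rho$, so $\iota$ is a topological embedding; thus $Y$ is automatically $L$-embedded and $\rho$ is a genuine retraction. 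This removes the need to verify $L$-embeddedness separately.

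Next I would set up the transpose correspondence between such retractions and extenders. Given $\rho$ as above, define $\phi\from C(Y)\to C(X)$ by $\langle\alpha,\phi(g)\rangle=\langle\rho(\alpha),g\rangle$ for $\alpha\in L(X)$, $g\in C(Y)$; this is the transpose of $\rho$, so it is linear and continuous for the weak topologies $C_p(Y)\to C_p(X)$, and the computation $\phi(g)(y)=\langle\delta_y,\phi(g)\rangle=\langle\rho(\delta_y),g\rangle=\langle\delta_y,g\rangle=g(y)$ for $y\in Y$ (using $\rho|Y=\delta_Y$) shows $\phi$ is an extender. Conversely, starting from a continuous linear extender $\phi\from C_p(Y)\to C_p(X)$, Proposition \ref{Propl-encajaretract} already gives that $Y$ is $\ell$-embedded and supplies the weakly continuous retraction $\rho=\phi^{t}\from L_p(X)\to L_p(Y)$ (the extender property yielding $\rho(\delta_x)=\delta_x$ for $x\in Y$ exactly as above). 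The two constructions are mutually inverse, so the whole theorem reduces to matching the free (not merely weak) continuity of $\rho$ with the preservation of equicontinuous pointwise bounded sets by $\phi$.

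The heart of the argument is the neighbourhood identity. For an equicontinuous pointwise bounded $B\subset C(Y)$ and $\varepsilon>0$, a direct computation with the base sets gives $\rho^{-1}\bigl(V[0,B,\varepsilon]\bigr)=V[0,\phi(B),\varepsilon]$, because $\rho(\alpha)\in V[0,B,\varepsilon]$ means $|\langle\rho(\alpha),g\rangle|<\varepsilon$ for all $g\in B$, which is exactly $|\langle\alpha,\phi(g)\rangle|<\varepsilon$ for all $g\in B$. Now $\rho$ is continuous as a map $L(X)\to L(Y)$ iff every such preimage is a neighbourhood of $0$ in $L(X)$. By the description of the topology of $L(X)$ as uniform convergence on equicontinuous pointwise bounded subsets of $C(X)$ (\cite{\refFlood}), the equicontinuous subsets of the dual $L(X)'=C(X)$ are precisely the equicontinuous pointwise bounded subsets of $C(X)$, and a set of the form $V[0,S,\varepsilon]$ is a $0$-neighbourhood iff $S$ is equicontinuous pointwise bounded. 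Hence $V[0,\phi(B),\varepsilon]$ is a $0$-neighbourhood iff $\phi(B)$ is equicontinuous pointwise bounded, giving the desired equivalence for every $B$.

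Assembling the pieces proves both implications simultaneously: if $Y$ is an $L$-retract, the retraction $\rho=r_\#$ is continuous $L(X)\to L(Y)$, so its transpose $\phi$ is a continuous linear extender that, by the neighbourhood identity, preserves equicontinuous pointwise bounded sets; conversely, given such a $\phi$, its transpose $\rho$ is a continuous linear map $L(X)\to L(Y)$ (again by the neighbourhood identity) with $\rho|Y=\delta_Y$, whence, by the first paragraph, $Y$ is an $L$-retract. I expect the main obstacle to be the bookkeeping in the crux step, specifically justifying cleanly that ``equicontinuous pointwise bounded family of functions on $Y$'' coincides with ``equicontinuous subset of the dual $L(Y)'$'' and that $V[0,S,\varepsilon]$ is a neighbourhood exactly for such $S$, since this is where the Ascoli-type identification of the free topology with the equicontinuous-convergence topology is really used; the transpose and extender verifications are routine once the dual pairing $\langle\alpha,f\rangle=f_\#(\alpha)$ and the fact $L(X)'=C(X)$ are in place.
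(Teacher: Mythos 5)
Your proposal is correct and is essentially the paper's own proof: your transpose correspondence is exactly the paper's $\phi(f)=(f_\#\circ r)|X$ in one direction and $q(x)=\delta_x\circ\phi$ (extended linearly) in the other, and your neighbourhood identity $\rho^{-1}\bigl(V[0,B,\varepsilon]\bigr)=V[0,\phi(B),\varepsilon]$ is precisely the paper's displayed computation, combined in both directions with Flood's description of the topology of $L(X)$ as uniform convergence on equicontinuous pointwise bounded subsets of $C(X)$. The step you flag as the main obstacle is real but elementary, so there is no gap: if $V[0,S,\varepsilon]$ is a $0$-neighbourhood, then the functionals $\{f_\#\from L(X)\to\reals : f\in S\}$ are equicontinuous at $0$, hence (by linearity) at every point, so their restrictions to the topologically embedded copy of $X$ form an equicontinuous family, and absorption of each $\delta_x$ by $V[0,S,\varepsilon]$ gives pointwise boundedness; your explicit reduction of $L$-embeddedness to the existence of a continuous linear $\rho\from L(X)\to L(Y)$ with $\rho|Y=\delta_Y$ is likewise a point the paper uses implicitly rather than a departure from its argument.
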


\begin{proof}
Suppose that $Y$ is an $L$-retract of $X$. Then there is a continuous linear retraction $r\from L(X)\to L(Y)$.
Define $\phi\from C_p(Y)\to C_p(X)$  by $\phi(f)= (f_\#\circ r)|X$ is a continuous linear extender, where 
 $f_\#\from L(Y)\to\reals$ is the linear extension of the function $f$ to $L(Y)$.
 
 Let $B\subset C(Y)$ be an equicontinuos pointwise bounded set; let us verify that the set 
 $\phi(B)=\{f_\# \circ r : f\in B \}$ is equicontinuous and pointwise bounded in $C(X)$.
By the definition of equicontinuity in a topological linear space \cite{\refSchaefer}, just note that given
an $\varepsilon>0$, the set 
\begin{equation*}
\bigcap_{f\in B} \left(f_\#\circ r\right)^{-1} (-\varepsilon,\varepsilon)
=r^{-1} \left( \bigcap_{f\in B} f^{-1}_\# (-\varepsilon,\varepsilon) \right)
\end{equation*}
is a neighborhood of zero. Thus, $\phi(B)$ is an equicontinuous pointwise bounded subset of $C(X)$.

It only remains to prove that if such a continuous linear extender exists, then $Y$ is a $L$-retract of $X$.
Define $q\from X\to L(Y)$ by $q(x)=\delta_x\circ\phi$ and let $r:L(X)\to L(Y)$ be the linear extension of $q$. Note that 
$q(x)$ is a continuous linear function on $C_p(Y)$, so $q(x)\in L_p(Y)$; thus, $q(x)$ also is an element of $L(Y)$, so $q$
is well-defined.  Furthermore, the restriction  $r|Y$ coincides with the Dirac's embedding of $Y$ in $L(Y)$, so $r$ is a
retraction. Thus, it remains only to verify that $q$ is continuous. 
Let $U=U[0,A,\varepsilon]$ be a neighborhood of zero in $L(Y)$, where $A\subset C(Y)$
 is an equicontinuous pointwise bounded set and $\varepsilon>0$. Since $\phi(A)$ is an equicontinuous pointwise bounded
 subset of $C(X)$, we have that the set $V=V[0,\phi(A), \varepsilon]$ is a neighborhood of zero in $L(X)$ such that
 $r (V)\subset U$. Since $r$ is continuous and linear, we conclude that $r \circ \delta_ X =q$ is continuous,
 so $Y$ is a $L$-retract of $X$.
\end{proof}

If $\varphi\from C_p(Y)\to C_p(X)$ is a continuous linear mapping such that for every equicontinuous pointwise bounded set
$A$ in $C(Y)$ the image $\varphi(A)$ is an equicontinuous pointwise bounded set in $C(Y)$, we will say that $\phi$ 
{\it preserves equicontinuous pointwise bounded sets}.

\begin{coro}
The spaces $X$ and $Y$ are $L$-equivalent if and only if there is a topological isomorphism 
$\varphi\from C_p(X)\to C_p(Y)$ such that both $\varphi$ and $ \varphi^{-1}$ preserve equicontinuous pointwise bounded sets.
\end{coro}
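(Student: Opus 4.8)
The plan is to build the correspondence through transposition, relying on the description recalled at the end of Section~2: the topological dual of $L(X)$ is $C(X)$, and the topology of $L(X)$ is that of uniform convergence on the equicontinuous pointwise bounded subsets of $C(X)$. Concretely, I would first record the reformulation that a set $A\subset C(X)$ is equicontinuous and pointwise bounded if and only if $V[0,A,\varepsilon]$ is a neighborhood of zero in $L(X)$ (for one, hence every, $\varepsilon>0$); this is the elementary translation of \emph{equicontinuous subset of the dual} into the polar picture, and it is what lets the two conditions in the statement talk to each other.

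For the direct implication I would start from a topological isomorphism $\Phi\from L(X)\to L(Y)$ and define $\varphi\from C_p(X)\to C_p(Y)$ by $\varphi(f)=(f_\#\circ\Phi^{-1})|Y$. Since $f_\#\circ\Phi^{-1}$ is a continuous linear functional on $L(Y)$ this lands in $C(Y)$, and by uniqueness of the linear extension $\varphi(f)_\#=f_\#\circ\Phi^{-1}$. It is then routine that $\varphi$ is linear and bijective, with inverse $g\mapsto(g_\#\circ\Phi)|X$, and that $\varphi,\varphi^{-1}$ are continuous for the pointwise topologies, because $\varphi(f)(y)=f_\#(\Phi^{-1}(\delta_y))$ is a fixed finite linear combination of evaluations of $f$. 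The essential point is the computation $V[0,\varphi(A),\varepsilon]=\Phi(V[0,A,\varepsilon])$: unwinding the definitions, $\beta\in V[0,\varphi(A),\varepsilon]$ exactly when $\Phi^{-1}(\beta)\in V[0,A,\varepsilon]$, and since $\Phi$ is open the right-hand side is a neighborhood of zero, so $\varphi(A)$ is equicontinuous and pointwise bounded; the same argument with $\Phi^{-1}$ handles $\varphi^{-1}$.

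For the converse I would transpose in the other direction. Given a topological isomorphism $\varphi\from C_p(X)\to C_p(Y)$ that preserves equicontinuous pointwise bounded sets both ways, and using that the dual of $C_p(X)$ is $L(X)$, the rule $\Phi(\alpha)=\alpha\circ\varphi^{-1}$ sends $L(X)$ into $L(Y)$ and is a linear bijection with inverse $\beta\mapsto\beta\circ\varphi$. To upgrade this to a homeomorphism for the free locally convex topologies I would check, for a basic neighborhood $V[0,B,\varepsilon]$ of zero in $L(Y)$ with $B$ equicontinuous and pointwise bounded, the identity $\Phi^{-1}(V[0,B,\varepsilon])=V[0,\varphi^{-1}(B),\varepsilon]$; since $\varphi^{-1}(B)$ is again equicontinuous and pointwise bounded by hypothesis, this is a neighborhood of zero, giving continuity of $\Phi$, and symmetrically for $\Phi^{-1}$. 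Thus $\Phi$ is a topological isomorphism and $X\stackrel{\mathit{L}}{\sim}Y$.

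I expect the only real content to sit in the two neighborhood identities $V[0,\varphi(A),\varepsilon]=\Phi(V[0,A,\varepsilon])$ and $\Phi^{-1}(V[0,B,\varepsilon])=V[0,\varphi^{-1}(B),\varepsilon]$, which are precisely where openness of $\Phi$ is traded for preservation of equicontinuous pointwise bounded sets, and conversely. The subtle point to keep honest is that the transposes genuinely land in the free locally convex spaces rather than in the larger algebraic duals, and that $\varphi$ being merely a $C_p$-isomorphism yields automatically only a weak ($L_p$) isomorphism --- it is exactly the preservation hypothesis that promotes this to an isomorphism for the finer topology of $L(X)$. Everything else is bookkeeping on the duality between $L(X)$ and $C(X)$.
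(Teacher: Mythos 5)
Your proposal is correct and takes essentially the same route as the paper: the forward direction by transposing a topological isomorphism $\Phi\from L(X)\to L(Y)$ to $f\mapsto (f_\#\circ\Phi^{-1})|Y$, and the converse by sending $\alpha\mapsto\alpha\circ\varphi^{-1}$ into $L_p(Y)=L(Y)$ and using the preservation hypothesis to get continuity in the free locally convex topologies. The only difference is presentational: you make explicit the neighborhood identities $V[0,\varphi(A),\varepsilon]=\Phi(V[0,A,\varepsilon])$ and $\Phi^{-1}(V[0,B,\varepsilon])=V[0,\varphi^{-1}(B),\varepsilon]$ that the paper compresses into ``this is straightforward'' and the final one-line continuity claim.
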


\begin{proof}
First let us suppose that $X$ and $Y$ are $L$-equivalent, that is, there is a topological isomorphism 
$\psi\from L(X)\to L(Y)$. Consider the mapping $\varphi \from  C_p(X)\to C_p(Y)$ defined by the rule 
$\varphi(f)=f_\# \circ \psi^{-1} \circ \delta_Y$. It is clear that $\varphi$ is continuous, linear and has
the inverse topological isomorphism $\varphi^{-1}(g)= g_\# \circ \psi \circ \delta_X$. It remains to show that given 
an $\varepsilon>0$ and an equicontinuous pointwise bounded set$A\subset C(X)$, the set
\begin{equation*}
\bigcap_{f\in A} \left(f_\# \circ \psi^{-1} \right)^{-1} (-\varepsilon,\varepsilon)
=\psi \left(\bigcap_{f\in A} {f}^{-1}_\# (-\varepsilon,\varepsilon) \right)
\end{equation*}
is a neighborhood of zero, but this is straightforward.

Conversely, if there is a topological isomorphism $\varphi\from C_p(X)\to C_p(Y)$ such that both 
$\varphi$ and $\varphi^{-1}$ preserves equicontinuous pointwise bounded sets, we can consider the map
$\psi\from L(X)\to L(Y)$ defined by $\psi(\alpha)=\alpha\circ\varphi^{-1}$. Recall that $\alpha\circ \varphi^{-1}$
is a continuous linear function on $C_p(X)$, so $\alpha\circ \varphi^{-1}$ is in $L_p(Y)$, and therefore in $L(Y)$.
Of course, $\psi$ has an inverse topological isomorphism given by $\psi^{-1}(\beta)=\beta \circ \varphi$.
Since both $\varphi$ and $\varphi^{-1}$ preserve equicontinuous pointwise bounded sets, both $\psi$ and $\psi^{-1}$
are continuous.
\end{proof}

The following statement only reinforces the known fact that in the class of $b_f$-spaces (such property is a
$\ell$-invariant) if two spaces are $\ell$-equivalent, then they are $L$-equivalent. Recall that a function 
$f\from X\to\reals$ is {\it $b$-continuous} if for every bounded set $A\subset X$ there is a continuous function
$g\from X\to\reals$ such that $g|A=f|A$. A space $X$ is called a {\it $b_f$-space} if every $b$-continuous real function 
is continuous. The class of $b_f$-spaces is larger than the class of $k$-spaces. Moreover, if $X$ is a $b_f$-space, 
a set $B\subset C_b(X)$ is compact if and only if $B$ is closed, equicontinuous and pointwise bounded \cite{\refUspenskii}.

\begin{coro}
Let $X$ and $Y$ be two $b_f$-spaces that are $\ell$-equivalent. Then $X$ and $Y$ are $L$-equivalent.
\end{coro}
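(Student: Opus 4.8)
The plan is to reduce the claim to the preceding Corollary, which states that $X$ and $Y$ are $L$-equivalent if and only if there is a topological isomorphism $\varphi\from C_p(X)\to C_p(Y)$ such that both $\varphi$ and $\varphi^{-1}$ preserve equicontinuous pointwise bounded sets. Since $X$ and $Y$ are $\ell$-equivalent, we already have a topological isomorphism $\varphi\from C_p(X)\to C_p(Y)$. So the whole burden of the proof is to show that for $b_f$-spaces this $\varphi$ (and, by symmetry, $\varphi^{-1}$) automatically preserves equicontinuous pointwise bounded sets.

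\smallskip
First I would recall the characterization quoted just before the statement: in a $b_f$-space $X$, a subset $B\subset C_b(X)$ is compact if and only if $B$ is closed, equicontinuous and pointwise bounded. The strategy is to move between the topological property (compactness) and the combinatorial property (equicontinuous and pointwise bounded) using this equivalence on both sides. Given an equicontinuous pointwise bounded set $A\subset C(X)$, I would first pass to its closure $\overline{A}$ in $C_p(X)$, observing that equicontinuity and pointwise boundedness are inherited by the closure, so $\overline{A}$ is compact by the cited result. A continuous map sends compact sets to compact sets, so $\varphi(\overline{A})$ is compact in $C_p(Y)$. Then, applying the same characterization in the reverse direction on the $b_f$-space $Y$, compactness of $\varphi(\overline{A})$ forces it to be equicontinuous and pointwise bounded, and hence its subset $\varphi(A)$ is equicontinuous and pointwise bounded as well.

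\smallskip
A point requiring care is that the cited compactness characterization is phrased for subsets of $C_b(X)$ with, presumably, a topology adapted to bounded sets, whereas the isomorphism $\varphi$ and the set $A$ live in $C_p(X)$. I would verify that on an equicontinuous pointwise bounded family the topology of pointwise convergence agrees with the relevant finer topology, so that compactness in the one sense transfers to compactness in $C_p$; this is the standard Ascoli-type observation that on an equicontinuous family pointwise convergence coincides with uniform convergence on compacta. With that identification in hand, the compactness of $\overline{A}$ in $C_p(X)$ is legitimate, and continuity of $\varphi$ as a map $C_p(X)\to C_p(Y)$ does the rest. The symmetric argument applied to $\varphi^{-1}$, using that $Y$ is also a $b_f$-space, shows that $\varphi^{-1}$ preserves equicontinuous pointwise bounded sets too.

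\smallskip
The main obstacle I anticipate is precisely this matching of topologies: ensuring that the compactness supplied by the Uspenskii characterization is compactness with respect to the topology on which $\varphi$ is known to be continuous. Once that reconciliation is made, everything else is a clean composition of "closure preserves equicontinuity and pointwise boundedness," "equicontinuous $+$ pointwise bounded $+$ closed $=$ compact in a $b_f$-space," and "continuous images of compacta are compact." Invoking the preceding Corollary then yields $L$-equivalence of $X$ and $Y$, completing the proof.
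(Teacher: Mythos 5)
Your overall strategy is the same as the paper's (transfer equicontinuity and pointwise boundedness through compactness, using the cited characterization for $b_f$-spaces), but there is a genuine gap at the decisive step. You apply the characterization ``compact $\Leftrightarrow$ closed, equicontinuous and pointwise bounded'' to the set $\varphi(\overline{A})$, which you only know to be compact in $C_p(Y)$. That characterization concerns compactness in $C_b(Y)$, the topology of uniform convergence on bounded sets, which is strictly finer than the pointwise topology, and $C_p$-compactness does not imply $C_b$-compactness. Indeed, compact subsets of $C_p(Y)$ need not be equicontinuous even for $Y=[0,1]$: let $f_n$ be a spike function with $f_n(0)=0$, $f_n\equiv 0$ outside $(0,1/n)$ and $\max f_n=1$; then $f_n\to 0$ pointwise, so $\{f_n : n\in\mathbb{N}\}\cup\{0\}$ is compact in $C_p([0,1])$ (a convergent sequence together with its limit), yet it is not equicontinuous at $0$ and is not compact in $C_b([0,1])$. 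So the ``reverse direction'' step fails as stated. Your proposed repair --- that on equicontinuous pointwise bounded families the pointwise topology agrees with the finer topology --- cannot close the gap: to invoke it on $\varphi(\overline{A})$ you would need to know already that $\varphi(\overline{A})$ is equicontinuous, which is exactly the conclusion you are after; the argument is circular. (It also identifies the wrong finer topology: $C_b$ carries uniform convergence on bounded sets, not on compacta.)

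What is missing is precisely the step the paper does first: show that the given isomorphism $\varphi\from C_p(X)\to C_p(Y)$ is automatically a topological isomorphism $C_b(X)\to C_b(Y)$. This holds because the bounded subsets of $X$ and of $Y$ are determined by the dual pairs $(C_p(X), L_p(X))$ and $(C_p(Y), L_p(Y))$, so the topology of uniform convergence on bounded sets is preserved by any topological isomorphism of the $C_p$-spaces. With that in hand, the paper takes the closure $[A]_b$ in $C_b(X)$, gets its $C_b$-compactness from the $b_f$ property of $X$, pushes it forward to the $C_b(Y)$-compact set $\varphi([A]_b)$, and only then applies the characterization on the $b_f$-space $Y$ to conclude that $\varphi(A)\subset\varphi([A]_b)$ is equicontinuous and pointwise bounded (finally noting $[A]_b=[A]_p$). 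Your argument, as written, never uses continuity of $\varphi$ in any topology finer than that of pointwise convergence, and that is exactly where it breaks.
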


\begin{proof}
Let $\varphi\from C_p(X)\to C_p(Y)$ be a topological isomorphism. It is not difficult to see that 
$\varphi\from C_b(X)\to C_b(Y)$ is a topological isomorphism ($C_b(X)$ is the space $C(X)$ endowed with the topology of
uniform convergence on the bounded sets of $X$). Now, let us take a set $A\subset C_p(X)$ which is equicontinuous and
pointwise bounded. Since $X$ is a $b_f$-space, we have that $[A]_b$, the closure of $A$ in $C_b(X)$, is compact.
Hence, $\varphi(A)\subset \varphi([A]_b)$ is equicontinuous and pointwise bounded. But $[A]_b=[A]_p$, the closure in 
$C_{p}(X)$, that is, $\varphi$ preserves equicontinuous pointwise bounded sets.
\end{proof}

\begin{ex}\rm
In particular, it is known that if $X$ is an uncountable discrete space, then the spaces $L_p(X)$ and $L_p(X)\oplus X$ are 
$\ell$-equivalent and they are not $L$-equivalent, that is, there is no topological isomorphism between $C_p(L_p(X))$ and
$C_p(L_p(X) \oplus X)$ that preserves equicontinuous pointwise bounded sets. On the other hand, each topological
isomorphism $\varphi\from C_p(X)\to C_p(Y)$, where $X$ and $Y$ are compact spaces, preserves equicontinuous pointwise
bounded sets.
\end{ex}

Returning to the consequences of Theorem \ref{Teopreservadecuad}, we have the following statement.

\begin{coro}\label{Criterio_L-retracto}
The following assertions are equivalent:
\begin{enumerate}
\item $Y$ is a $L$-retract of $X$;
\item There is a continuous linear retraction $r\from L(X)\to L(Y)$;
\item There is a continuous linear extender $\varphi\from C_p(Y) \to C_p(X)$ such that $\varphi$ preserves
equicontinuous pointwise bounded sets;
\item Every continuous function from $Y$ to a locally convex space $E$ extends to a continuous function form $X$ to $E$.
\end{enumerate}
\end{coro}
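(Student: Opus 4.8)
The plan is to lean on Theorem \ref{Teopreservadecuad}, which already supplies the equivalence (1) $\Leftrightarrow$ (3), and to close the remaining loop by threading (2) and (4) through the universal property of the free locally convex space. The implication (1) $\Rightarrow$ (2) is immediate: by the very definition of an $L$-retract there is a continuous linear retraction $r\from L(X)\to L(Y)$.

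For (2) $\Rightarrow$ (1) I would first recall that the inclusion $Y\hookrightarrow X$ linearizes to a continuous linear injection $i\from L(Y)\to L(X)$ (continuous by the Corollary after Theorem \ref{Fundamental}, injective because $\delta_X(Y)$ is part of the Hamel base of $L(X)$), whose image is precisely the span of $Y$ in $L(X)$. The content of condition (1) beyond (2) is that $Y$ be $L$-embedded, i.e.\ that $i$ be a topological embedding. The retraction delivers exactly this: from $r\circ i=\id_{L(Y)}$ one gets, for every open $U\subseteq L(Y)$, that $r^{-1}(U)$ is open in $L(X)$ and $i^{-1}\bigl(r^{-1}(U)\bigr)=U$, so the subspace topology induced by $i$ coincides with the free topology of $L(Y)$. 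Hence $Y$ is $L$-embedded and (1) holds. I expect this recognition to be the only real subtlety, since in general $L(Y)$ need not inherit its topology from $L(X)$, and it is the mere existence of the continuous linear retraction that forces $i$ to be an embedding.

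The equivalence (2) $\Leftrightarrow$ (4) then rests on the universal property. For (2) $\Rightarrow$ (4), given a continuous $f\from Y\to E$ into a locally convex space $E$, I would extend it to $f_\#\from L(Y)\to E$ and set $\tilde f=(f_\#\circ r)|X$; since $r$ fixes $\delta_x$ for each $x\in Y$, the restriction $\tilde f|Y$ recovers $f$, so $\tilde f$ is the desired extension. For (4) $\Rightarrow$ (2), I would apply (4) to the continuous Dirac map $\delta_Y\from Y\to L(Y)$, obtaining a continuous extension $g\from X\to L(Y)$, and linearize it to $r=g_\#\from L(X)\to L(Y)$; because $g(x)=\delta_x$ on $Y$ and $\{\delta_y:y\in Y\}$ is a Hamel base of $L(Y)$, the linear map $r$ restricts to the identity on $L(Y)$ and is therefore a continuous linear retraction.

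Assembling the pieces, (1) $\Leftrightarrow$ (3) is Theorem \ref{Teopreservadecuad}, while (1) $\Leftrightarrow$ (2) and (2) $\Leftrightarrow$ (4) have just been established; together these yield the full equivalence of the four assertions.
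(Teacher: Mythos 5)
Your proof is correct, and it is essentially the argument the paper intends: the paper states this corollary with no written proof at all, presenting it as an immediate consequence of Theorem \ref{Teopreservadecuad} together with the definition of an $L$-retract and the universal property of $L(Y)$, which is exactly how you have organized things ((1) $\Leftrightarrow$ (3) from the theorem, (1) $\Leftrightarrow$ (2) from the definition, (2) $\Leftrightarrow$ (4) from the universal property). The one place where you supply genuine content beyond the paper is the implication (2) $\Rightarrow$ (1): your observation that a continuous linear retraction $r\from L(X)\to L(Y)$, with $L(Y)$ carrying its free topology, forces the canonical continuous injection $i\from L(Y)\to L(X)$ to be a topological embedding (continuity of $i$ gives one inclusion of topologies, and $i^{-1}\bigl(r^{-1}(U)\bigr)=U$ gives the other), so that $L$-embeddedness of $Y$ comes for free rather than being an extra hypothesis. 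This is worth making explicit, because the paper itself uses this step silently: at the end of its proof of Theorem \ref{Teopreservadecuad} it constructs the continuous linear retraction $r$ and concludes directly that ``$Y$ is an $L$-retract of $X$'' without verifying that $Y$ is $L$-embedded. Your writeup closes precisely that gap, and all four implications as you state them are sound.
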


From this proposition it follows immediately that, in the same way that $X$ is $\ell$-embedded in $L_p(X)$ 
($X$ is an $\ell$-retract of $L_p(X)$), $X$ is an $L$-retract of $L(X)$. Also, note that in view of the 
Example \ref{l no L}, $X$ it is not always an $L$-retract of $L_{p}(X)$.

It is time to apply our results. First, based on Dugundji's extension theorem we have the following:

\begin{theorem}\label{TeoDugundji}
Let $X$ be a metric space. The following statements are equivalent:
\begin{itemize}
\item $Y$ is a closed subset of $X$;
\item $Y$ is a $L$-retract of $X$;
\item $Y$ is $\ell$-embedded in $X$.
\end{itemize}
\end{theorem}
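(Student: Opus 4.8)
The plan is to prove the three-way equivalence by establishing a cycle of implications, letting Dugundji's extension theorem and the characterizations already obtained carry all the weight. Concretely, I would show (closed) $\Rightarrow$ ($L$-retract) $\Rightarrow$ ($\ell$-embedded) $\Rightarrow$ (closed), so that the three conditions collapse into one.

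The substantive step is (closed) $\Rightarrow$ ($L$-retract). Assuming $Y$ is closed in the metric space $X$, I would invoke condition (4) of Corollary \ref{Criterio_L-retracto}, which characterizes $L$-retracts by the property that every continuous function from $Y$ into a locally convex space extends continuously over $X$. Given such an $f\from Y\to E$ with $E$ locally convex, Dugundji's extension theorem—stating precisely that locally convex spaces are absolute extenders for the class of metric spaces—produces a continuous extension $\tilde f\from X\to E$. Since $X$ is metric and $Y$ is closed, the hypotheses of Dugundji's theorem are met, and $E$ ranges over exactly the universe demanded by condition (4); hence $Y$ satisfies that condition and is an $L$-retract of $X$.

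The remaining two implications are direct citations. That an $L$-retract is $\ell$-embedded is immediate from Proposition \ref{PropFL-encajaimpliLlB-encaja}, and that an $\ell$-embedded set is closed follows from the basic fact recorded earlier that $\ell$-embedded sets are $t$-embedded and every $t$-embedded set is closed. I do not anticipate a genuine obstacle: the only conceptual point is recognizing that Dugundji's theorem delivers exactly the extension property in (4). The one subtlety worth verifying is the matching of targets—Dugundji supplies extensions into all locally convex spaces, which is what the $L$-retract characterization requires, rather than the narrower class of weak topological linear spaces that appears in the $\ell$-embedded characterization; this is why the metric hypothesis upgrades closedness all the way to the $L$-retract property and not merely to $\ell$-embeddedness.
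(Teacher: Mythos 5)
Your proposal is correct and takes essentially the same approach as the paper: both establish the cycle (closed) $\Rightarrow$ ($L$-retract) $\Rightarrow$ ($\ell$-embedded) $\Rightarrow$ (closed), with Dugundji's theorem carrying the only substantive step and the remaining implications cited from Proposition \ref{PropFL-encajaimpliLlB-encaja} and the closedness of $t$-embedded sets. The sole (cosmetic) difference is that the paper applies Dugundji once to the universal map $\delta_Y\from Y\to L(Y)$ and concludes directly that $Y$ is an $L$-retract, whereas you quantify over all locally convex targets and invoke criterion (4) of Corollary \ref{Criterio_L-retracto}; these amount to the same argument.
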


\begin{proof}
Let $Y$ be a closed subset of a metric space $X$ and $\delta_Y\from Y\to L(Y)$ the Dirac's embedding of $Y$ in 
$L(Y)$. Applying the Dugundji's extension theorem we get a continuous function $f\from X\to L(Y)$ such that 
$f|Y=\delta_{Y}$, then, $Y$ is a $L$-retract of $X$. The other implications are clear.
\end{proof}

The Dugundji's extension theorem has been generalized in several ways, specifically, Borges generalized it to 
stratifiable spaces, and Stares did the same for the decreasing $(G)$ spaces (in the sense of \cite{\refCollins}).
On the other hand, note that each stratifiable space is a decreasing $(G)$ space, and each decreasing $(G)$ space is
hereditarily paracompact, so we could ask ourselves if for the hereditarily paracompact spaces, it is true that every
closed set is an $L$-retract. The answer is ``no".

\begin{ex}
{\sl There is a hereditarily paracompact space $X$ and a closed set $Y$ in $X$ such that $Y$ it is not a $L$-retract of $X$.}

\smallskip
{\rm Let $X$ be the Michael's line (\cite[Example 5.1.32]\refEngelking). In this space, the set $\mathbb Q$ of rational number
is a closed $P$-embedded set. On the other hand, the set $\mathbb P$ of irrational number with the topology inherited from
the Euclidean's metric, we have that the space of continuous functions with the compact-open topology $C_k(\mathbb{P})$ is a
locally convex space.  From \cite{\refSennott} we arrive at the existence of a continuous function 
$f\from \mathbb Q \to C_k(\mathbb P)$ that does not have continuous extension to $X$, namely, the function 
$f(x)(y)=\frac{1}{x-y}$, where $x\in \mathbb Q$ and $y\in \mathbb P$. With this we verify that $Y$ is not an $L$-retract of
$X$.}
\end{ex}

The previous example shows that, in general, we must impose stronger conditions on the subset $Y$ to make sure that 
$Y$ be an $L$-retract of $X$. For instance, we will see that some of them need metrizability as an additional condition.

A set $A\subset X$ is called {\it strongly discrete} if there is a discrete family $\{U_a: a\in A\}$ of disjoint open sets
in $X$ such that $a\in U_a$ for every $a\in A$. Taking into account the final observation of \cite{\refMichael} we easily get
the following.

\begin{coro}
Let $Y$ be a subspace of $X$. Then
\begin{enumerate}
\item If $X$ is paracompact and $Y$ is closed and metrizable, then $Y$ is an $L$-retract of $X$;
\item If $X$ is normal and $Y$ is closed, metrizable and separable, then $Y$ is an $L$-retract of $X$;
\item If $X$ is Tychonoff and $Y$ is compact and metrizable, then $Y$ is an $L$-retract of $X$;
\item If $X$ is Tychonoff and $Y$ is strongly discrete, then $Y$ is an $L$-retract of $X$.
\end{enumerate}
\end{coro}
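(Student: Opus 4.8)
The plan is to leverage the characterization in Corollary \ref{Criterio_L-retracto}, specifically the equivalence between ``$Y$ is an $L$-retract of $X$'' and ``every continuous function from $Y$ to a locally convex space $E$ extends continuously to $X$.'' Thus for each item it suffices to produce, for an arbitrary locally convex target $E$ and an arbitrary continuous $f\from Y\to E$, a continuous extension $F\from X\to E$. The unifying idea across all four cases is that $E$, being locally convex, is an absolute extender for metric spaces (Dugundji) or for the relevant class; the work lies in passing from extensions \emph{into} $E$ over a metrizable or well-separated $Y$ to extensions over all of $X$, which is exactly the content of the extension theorems referenced just before the statement.

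For items (1)--(3) I would invoke the ``final observation of \cite{\refMichael}'' as suggested: Michael's results give simultaneous or continuous extension of maps into locally convex spaces when the domain pair $(X,Y)$ is suitably nice. Concretely, in (1) a closed metrizable $Y$ inside a paracompact $X$ is $P$-embedded (hence $L$-embedded by the earlier Proposition), and Michael's extension theorem for maps into locally convex spaces over a $P$-embedded metrizable closed subset yields the desired continuous extension $F\from X\to E$; by Corollary \ref{Criterio_L-retracto} this gives the $L$-retract conclusion. Item (2) is the separable normal analogue, where normality plus separability of the closed metrizable $Y$ substitutes for paracompactness in guaranteeing $P$-embedding and the extension property. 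Item (3) is the easiest: a compact metrizable $Y$ in a Tychonoff $X$ is automatically $P$-embedded (compactness makes every continuous pseudometric extendable), and compactness of $Y$ lets us extend maps into $E$ directly, again closing the loop via Corollary \ref{Criterio_L-retracto}.

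For item (4), the strongly discrete case, I would argue more explicitly. Given the discrete family $\{U_a : a\in A\}$ of disjoint open sets with $a\in U_a$, and a continuous $f\from Y\to E$, I would build the extension by using, for each $a$, the complete regularity of $X$ to choose a continuous ``bump'' function $g_a\from X\to[0,1]$ supported in $U_a$ with $g_a(a)=1$, and set $F(x)=\sum_{a\in A} g_a(x)\,f(a)$. Because the family $\{U_a\}$ is discrete, every point of $X$ has a neighborhood meeting at most one $U_a$, so the sum is locally finite and defines a continuous map into $E$; on $Y$ it recovers $f$. The main obstacle I anticipate is verifying continuity of $F$ at points of $X$ that lie in the common boundary of all the $U_a$ (where every $g_a$ vanishes): there one must check that $F$ takes values near $0\in E$, which follows from the discreteness of the family forcing $F$ to vanish on a neighborhood of any such point, but this requires care with the locally convex topology of $E$ rather than treating $E$ as if it were normed. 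Once continuity is confirmed, Corollary \ref{Criterio_L-retracto} again delivers that $Y$ is an $L$-retract of $X$.
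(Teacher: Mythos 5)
Your proposal is correct. For items (1)--(3) it is essentially the paper's own argument: the paper dismisses these as ``obvious,'' the intended reasoning being exactly what you wrote, namely Michael's extension theorems (the ``final observation'' of \cite{\refMichael}, which covers precisely the three pairs paracompact/closed metrizable, normal/closed separable metrizable, Tychonoff/compact metrizable) combined with item 4 of Corollary \ref{Criterio_L-retracto}; your detour through $P$-embedding is redundant but harmless, since Michael's theorems apply directly under the stated hypotheses. For item (4), however, you take a genuinely different and in fact shorter route. The paper works on the function-space side: it reproduces Arhangel'skii's linear extender $\phi(f)=\sum_{y\in Y}f(y)\cdot h_y$ and then verifies, via the bounds $M_y$ and an $\varepsilon$-argument, that $\phi$ preserves equicontinuous pointwise bounded sets, so that item 3 of Corollary \ref{Criterio_L-retracto} applies. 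You instead verify item 4 of that corollary directly: given $f\from Y\to E$, you set $F=\sum_{a\in Y}g_a\cdot f(a)$, and discreteness of $\{U_a : a\in Y\}$ makes this sum locally either identically $0$ or equal to the single continuous map $g_a(\cdot)f(a)$, so continuity of $F$ is immediate; your worry about points on the boundary of $\bigcup_a U_a$ is actually vacuous, since on a neighborhood meeting only $U_a$ the map $F$ coincides with $g_a(\cdot)f(a)$ no matter where $g_a$ vanishes, and no normed-space estimates are needed. Your route buys brevity, as all the equicontinuity bookkeeping disappears. What the paper's route buys is an explicit witness of its own new characterization --- a continuous linear extender $C_p(Y)\to C_p(X)$ preserving equicontinuous pointwise bounded sets --- thereby showing how the classical proof that strongly discrete sets are $\ell$-embedded upgrades word for word to the $L$-retract setting, which is the point the authors say they want to emphasize.
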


\begin{proof}
The first three statements are obvious. In \cite{\refArhangel} it was shown that if $Y$ is a strongly discrete subspace,
then $Y$ is $\ell$-embedded in $X$. We will reproduce the original proof with the emphasis on the preservation by the defined
extender of equicontinuous pointwise bounded sets. Let $\mathcal U=\{U_y : y\in Y\}$ be a discrete family of disjoint
open sets in $X$ such that $y\in U_{y}$ for every $y\in Y$, also, for each $y\in Y$ let $h_y\in C(X)$ be a function 
such that $h_y(X)\subset [0,1]$), $h_y(y)=1$ and $h_y(X\setminus U_y)\subset \{0\}$. Define the function
$\psi(x)=\sum_{y\in Y}h_y(x)$. Since the family $\mathcal U$ is discrete, the function $\psi$ is defined on $X$ and is
continuous. Hence, the linear extender $\phi\from C_p(Y)\to C_{p}(X)$ defined by the rule 
$\phi(f)=\sum_{y\in Y}f(y)\cdot h_y$ is continuous.

Let $\mathcal F\subset C_p(Y)$ be an equicontinuous and pointwise bounded family of functions. We will verify that 
$\phi(\mathcal F)=\{\phi(f): f\in \mathcal F\}$ is equicontinuous and pointwise bounded. For each $y\in Y$, 
let $M_y\in\reals$ be such that $\{f(y):f\in \mathcal F\}\subset [-M_y, M_y]$. Given an $\varepsilon>0$ and $x\in X$,
if $x$ has a neighborhood disjoint from $\bigcup\mathcal U$, we have $\phi(f)(x)=0$
for every $f\in C_{p}(Y)$. Otherwise, there is a neighborhood $U$ of $x$ such that $U\cap U_y\neq\emptyset$ for
a unique $y\in Y$. Put $V=h_y^{-1}(h_{y}(x)-\varepsilon/M_y, h_y(x)+\varepsilon/M_y)$ and $W=U\cap V$. Then $W$ is an
open neighborhood of $x$, and for each $z\in W$ and $f\in\mathcal F$ we have
\begin{equation*}
\left | \phi(f)(x)-\phi(f)(z) \right |=\left | f(y)\left( h_{y}(x)-h_{y}(z)  \right)  \right |\leq M_{y} \left | h_{y}(x)-h_{y}(z) \right |< M_{y}\cdot \frac{\varepsilon}{M_{y}}=\varepsilon. 
\end{equation*}
that is, $\phi(\mathcal F)$ is an equicontinuous set that clearly is pointwise bounded.
\end{proof}

Note that, although in the class of metric spaces, the $L$-retracts and the $\ell$-embedded sets are the same, 
in general, in the generalizations of the Dugundji's extension theorem, we cannot weaken the condition that $Y$ is an
$L$-retract to being an $\ell$-embedded set.

\begin{ex}
\rm Let $Y$ be the discrete space of cardinality $\omega_1$ and $X=L_{p}(Y)$. It is clear that $Y$ is $\ell$-embedded in $X$.
The function $\delta_Y\from Y\to L(Y)$ has no continuous extension to $X$, because otherwise we would 
have that $Y$ is an $L$-retract of $X$, which is false ($Y$ is not $L$-embedded in $L_p(Y)$).

\smallskip 
Even if both $X$ and $Y$ are compact spaces, $Y$ need not be an $L$-retract of $X$. Indeed, let $X=\beta \mathbb{N}$ and
$Y=\beta \mathbb{N}\setminus \mathbb{N}$, then $Y$ is not $t$-embedded in $X$, that is, there is no continuous mapping
$\varphi \from C_p(Y)\to C_p(X)$ \cite\refArhangel.
\end{ex}

\section{A method for constructing examples of $L$-equivalent spaces}

Now we will concentrate on finding a method that generates examples of $L$-equivalent spaces. Clearly, the method described
by Okunev in \cite[Theorem 2.4]\refOkunevb{} already generates examples of $L$-equivalent spaces, however, the notion of a retract
is quite restrictive, and as we see, every retract is a $L$-retract. Thus, we will show that the notion of an $L$-retract is
sufficient for establishing our method.

Let $K_1$ and $K_2$ be two $L$-retracts of a space $X$, we will say that $K_1$ and $K_2$ are {\it parallel\/} if there are
continuous linear retractions $r_1\from L(X)\to L(K_1)$ and $r_2\from L(X)\to L(K_2)$ such that $r_1\circ r_2=r_1$ and
$r_2\circ r_1=r_2$. 

\begin{prop}
$K_1$ and $K_2$ are parallel $L$-retracts of $X$ if and only if there is a continuous linear retraction
$r_1\from L(X)\to L(K_1)$ such that the restriction $r_1|L(K_2)$ is a topological isomorphism from $L(K_2)$ onto $L(K_1)$.
In particular, $K_1$ is $L$-equivalent to $K_2$.
\end{prop}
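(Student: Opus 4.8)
The plan is to establish the equivalence by direct algebraic manipulation of the retractions, relying on two structural facts: a continuous linear retraction onto a subspace restricts to the identity there, and is therefore idempotent. Since $K_1$ and $K_2$ are $L$-retracts, they are $L$-embedded by Proposition \ref{PropFL-encajaimpliLlB-encaja}, so $L(K_1)$ and $L(K_2)$ are locally convex subspaces of $L(X)$; this is what makes the restrictions appearing below well-defined continuous linear maps, and I would record it at the outset.

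For the forward implication I would assume $K_1$ and $K_2$ parallel, with witnessing retractions $r_1\from L(X)\to L(K_1)$ and $r_2\from L(X)\to L(K_2)$ satisfying $r_1\circ r_2=r_1$ and $r_2\circ r_1=r_2$, and then exhibit $r_1|L(K_2)$ as a topological isomorphism onto $L(K_1)$ whose inverse is $r_2|L(K_1)$. The two key computations are: for $\alpha\in L(K_2)$, the identity $r_2\circ r_1=r_2$ together with $r_2|L(K_2)=\id$ gives $r_2(r_1(\alpha))=r_2(\alpha)=\alpha$; and symmetrically, for $\beta\in L(K_1)$, the identity $r_1\circ r_2=r_1$ together with $r_1|L(K_1)=\id$ gives $r_1(r_2(\beta))=\beta$. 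These show the two restrictions are mutually inverse continuous linear bijections, so $r_1|L(K_2)$ is the desired topological isomorphism.

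For the converse I would start from a continuous linear retraction $r_1\from L(X)\to L(K_1)$ with $r_1|L(K_2)$ a topological isomorphism onto $L(K_1)$, set $\theta=(r_1|L(K_2))^{-1}\from L(K_1)\to L(K_2)$, and define $r_2=\theta\circ r_1$. First I would check that $r_2$ is a continuous linear retraction onto $L(K_2)$: for $\alpha\in L(K_2)$ one has $r_1(\alpha)=(r_1|L(K_2))(\alpha)$, so $r_2(\alpha)=\theta(r_1(\alpha))=\alpha$, giving $r_2|L(K_2)=\id$ and hence image exactly $L(K_2)$. Then I would verify the two parallel identities: using idempotency $r_1\circ r_1=r_1$ one gets $r_2\circ r_1=\theta\circ r_1\circ r_1=\theta\circ r_1=r_2$; and using $r_1\circ\theta=\id$ on $L(K_1)$ one gets $r_1\circ r_2=r_1\circ\theta\circ r_1=r_1$.

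The ``in particular'' clause is then immediate: in either direction $r_1|L(K_2)$ is a topological isomorphism between $L(K_2)$ and $L(K_1)$, so these free locally convex spaces are topologically isomorphic and $K_1\stackrel{\mathit{L}}{\sim}K_2$ by definition. The only point requiring real care is the converse: one must confirm that $r_2=\theta\circ r_1$ is a genuine retraction rather than merely a continuous linear map into $L(K_2)$, and the idempotency of $r_1$ must be invoked precisely at the step $r_2\circ r_1=r_2$; everything else is formal bookkeeping.
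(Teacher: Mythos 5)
Your proof is correct and follows essentially the same route as the paper's: the forward direction exhibits $r_1|L(K_2)$ as a topological isomorphism with inverse $r_2|L(K_1)$, and the converse defines $r_2=(r_1|L(K_2))^{-1}\circ r_1$ and verifies the parallel identities via idempotency of $r_1$. You merely spell out the computations (mutual inverses, $r_2|L(K_2)=\id$) that the paper leaves implicit, which is fine.
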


\begin{proof}
Suppose $K_{1}$ and $K_{2}$ are parallel $L$-retracts of $X$. Let $r_1\from L(X)\to L(K_1)$ and $r_2\from L(X)\to L(K_2)$ be
continuous linear retractions such that $r_1\circ r_2=r_1$ and $r_2\circ r_1=r_2$.
Then $i=r_1|L(K_2)$ is a topological isomorphism of $L(K_2)$ onto $L(K_1)$ with the inverse $j=r_2|L(K_1)$.

Conversely, if there is a continuous linear retraction $r_1\from L(X)\to L(K_1)$ such that the restriction
$i=r_1|L(K_2)$ is a topological isomorphism from $L(K_2)$ onto $L(K_1)$, let $j=i^{-1}$ and put $r_2=j\circ r_1$.
Then $r_2$ is a continuous linear retraction from $L(X)$ to $L(K_2)$, $r_1\circ r_2=r_1$, and $r_2\circ r_1=r_2$.
\end{proof}

Recall that a continuous mapping $p\from X\to Y$ is {called $\reals$-quotient\/} if $p(X)=Y$ and whenever
 $f$ is a real function on $Y$ such that the composition  $f\circ p\from X\to \reals$ is continuous, $f$ is continuous
 \cite{\refKarnik}. The following statement is Proposition 1.1 in \cite{\refOkunevb}.
 
 \begin{prop}\label{rquot-tych}
 If $p\from X\to Y$ is an $\reals$-quotient mapping, $Z$ is a completely regular space, and $f\from Y\to Z$ is a function
 such that the composition $f\circ p$ is continuous, then $f$ is continuous.
 \end{prop}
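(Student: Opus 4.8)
The plan is to exploit the defining feature of the $\reals$-quotient property, which concerns only \emph{real-valued} functions on $Y$, and to bridge the gap to the arbitrary codomain $Z$ using complete regularity. The key observation is that since $Z$ is completely regular, its topology is initial with respect to the family $C(Z)$ of all continuous real-valued functions on $Z$. Consequently, a function $f\from Y\to Z$ is continuous if and only if the composition $h\circ f\from Y\to\reals$ is continuous for every $h\in C(Z)$. This reduces the problem of continuity of $f$ into $Z$ to a family of assertions about continuity of real functions on $Y$, precisely the setting in which the $\reals$-quotient hypothesis can be applied.

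With this reduction in hand, I would fix an arbitrary $h\in C(Z)$ and set $g=h\circ f\from Y\to\reals$. The goal becomes showing that $g$ is continuous. By the definition of an $\reals$-quotient mapping, it suffices to verify that the composition $g\circ p\from X\to\reals$ is continuous. But here a simple associativity of composition does the work: $g\circ p = (h\circ f)\circ p = h\circ(f\circ p)$. The inner map $f\circ p$ is continuous by hypothesis, and $h$ is continuous since $h\in C(Z)$, so their composition is continuous. Thus $g\circ p$ is continuous, and the $\reals$-quotient property of $p$ yields that $g=h\circ f$ is continuous on $Y$.

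Finally, since $h\in C(Z)$ was arbitrary, I would conclude that $h\circ f$ is continuous for every $h\in C(Z)$, and hence, by the initial-topology characterization noted above, that $f$ itself is continuous. There is no substantial obstacle in the calculation; the only conceptual step that must be made explicit is the passage from the scalar-valued hypothesis in the definition of $\reals$-quotient to the general codomain, and this is exactly what complete regularity of $Z$ supplies. In effect the proof shows that the $\reals$-quotient property, although phrased only for real functions, automatically upgrades to give a quotient-type universal property for all maps into Tychonoff spaces.
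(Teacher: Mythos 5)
Your proof is correct. Note that the paper itself gives no argument for this proposition---it is quoted verbatim as Proposition 1.1 of the cited reference \cite{\refOkunevb}---so there is nothing internal to compare against; but your argument (reduce continuity into $Z$ to continuity of the real functions $h\circ f$ for $h\in C(Z)$ via the initial-topology characterization of complete regularity, then apply the $\reals$-quotient property to each $h\circ f$) is the standard proof of this fact and is essentially the one in that reference. Every step checks out: complete regularity of $Z$ is exactly what makes the topology of $Z$ initial with respect to $C(Z)$, and the associativity identity $(h\circ f)\circ p=h\circ(f\circ p)$ is all that is needed to invoke the hypothesis.
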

 
 \begin{prop} \label{r-quot-open}
 A mapping $p\from X\to Y$ is $\reals$-quotient if and only if its extension $p_\#\from L(X)\to L(Y)$ is open.
 \end{prop}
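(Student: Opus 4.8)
The plan is to prove both implications through the transpose of $p_\#$. Surjectivity is available on both sides: it is part of the definition of an $\reals$-quotient map, and if $p_\#$ is open then $p_\#(L(X))$ is an open linear subspace of $L(Y)$, hence all of $L(Y)$, so $p$ is onto. The transpose of $p_\#$ is the map $p^\ast\from C(Y)\to C(X)$ given by $p^\ast(g)=g\circ p$; this rests on the identities $\langle p_\#(\alpha),g\rangle=\langle\alpha,g\circ p\rangle$ for $\alpha\in L(X)$, $g\in C(Y)$, and more generally $f_\#\circ p_\#=(f\circ p)_\#$ for every real function $f$ on $Y$, which will carry all the weight.

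For the implication $(\Leftarrow)$, assume $p_\#$ is open. Being a continuous open surjection, $p_\#$ is a topological quotient map, so a map out of $L(Y)$ is continuous as soon as its composition with $p_\#$ is. If $f\from Y\to\reals$ satisfies $f\circ p\in C(X)$, then $f_\#\circ p_\#=(f\circ p)_\#$ is continuous on $L(X)$ by the Corollary following Theorem \ref{Fundamental}, whence $f_\#$ is continuous on $L(Y)$, and therefore $f=f_\#|Y$ is continuous. Thus $p$ is $\reals$-quotient, and this direction needs nothing more.

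For the implication $(\Rightarrow)$, assume $p$ is $\reals$-quotient. Let $\tau_q$ be the quotient topology on $L(Y)\cong L(X)/\ker p_\#$ induced by $p_\#$; since $p_\#$ is continuous one always has $\tau_{L(Y)}\subseteq\tau_q$, and $p_\#$ is open precisely when these two locally convex topologies coincide. As a locally convex topology is recovered from its family of equicontinuous subsets of the dual, it suffices to check that $\tau_q$ and $\tau_{L(Y)}$ have the same equicontinuous sets. Writing the polar of $p_\#(W)$ as $\{f_\#:(f\circ p)_\#\in W^\circ\}$, a short computation with the identities above shows that a family $B$ of functionals is $\tau_q$-equicontinuous exactly when $p^\ast(B)=\{g\circ p:g\in B\}$ is an equicontinuous pointwise bounded subset of $C(X)$, whereas $B$ is $\tau_{L(Y)}$-equicontinuous exactly when $B$ itself is an equicontinuous pointwise bounded subset of $C(Y)$ (here I use the description of the topology of $L(X)$ as uniform convergence on such sets). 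Everything therefore reduces to a single lemma.

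The crux is to show: if $B\subseteq C(Y)$ is such that $\{g\circ p:g\in B\}$ is equicontinuous and pointwise bounded on $X$, then $B$ is equicontinuous and pointwise bounded on $Y$. Pointwise boundedness transfers immediately from surjectivity, since $\{g(p(x)):g\in B\}=\{(g\circ p)(x):g\in B\}$. For equicontinuity I would encode the whole family $B$ as a single map into a Banach space: set $\Phi_B\from Y\to\ell^\infty(B)$, $\Phi_B(y)=(g(y))_{g\in B}$, so that equicontinuity of $B$ is exactly continuity of $\Phi_B$ for the sup norm, and likewise equicontinuity of $\{g\circ p:g\in B\}$ is continuity of $\Phi_B\circ p$. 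Since $\ell^\infty(B)$ is a Banach space, hence completely regular, and $\Phi_B\circ p$ is continuous, Proposition \ref{rquot-tych} applies and gives that $\Phi_B$ is continuous, i.e.\ $B$ is equicontinuous. This last step, transferring equicontinuity across an $\reals$-quotient map, is the only genuine obstacle, and the embedding into $\ell^\infty(B)$ combined with Proposition \ref{rquot-tych} is precisely what makes it go through. Feeding the lemma back into the reduction yields $\tau_q=\tau_{L(Y)}$, and hence the openness of $p_\#$.
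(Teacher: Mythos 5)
Your proof is correct, but the forward direction takes a genuinely different route from the paper's, so a comparison is worth recording. The ($\Leftarrow$) direction is essentially the paper's own argument (you additionally derive surjectivity of $p$ from openness of $p_\#$ via the fact that an open linear subspace is everything and that $\delta_Y(Y)$ is a Hamel basis --- a point the paper's proof leaves implicit). For ($\Rightarrow$), both you and the paper reduce the claim to showing that the quotient topology which $L(X)/\ker p_\#$ induces on $L(Y)$ coincides with the free topology, and both hinge on Proposition \ref{rquot-tych}; the difference is where that proposition gets applied. The paper stays on the primal side: writing $p_\#=i\circ\pi$ with $\pi$ the canonical projection onto $L=L(X)/\ker p_\#$, it reduces continuity of $j=i^{-1}$ to continuity of the restriction $j|Y$ via the universal property of $L(Y)$ (the Corollary following Theorem \ref{Fundamental}), and then applies Proposition \ref{rquot-tych} to $j|Y\from Y\to L$, using only that $L$ is locally convex and Hausdorff, hence Tychonoff. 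You work instead on the dual side: you compare the two locally convex topologies through their equicontinuous families of functionals, invoking Flood's description of the topology of free spaces, and you transfer equicontinuity backwards across $p$ by packaging a family $B\subset C(Y)$ into the single map $\Phi_B\from Y\to\ell^\infty(B)$ and applying Proposition \ref{rquot-tych} to that map. The paper's route is shorter and needs no duality theory at all; in particular it never touches the explicit description of the topology of $L(X)$. Your route carries more machinery --- the ``short computation'' identifying the $\tau_{L(Y)}$-equicontinuous sets with the equicontinuous pointwise bounded subsets of $C(Y)$, and the $\tau_q$-equicontinuous sets with those $B$ whose pullback $p^\ast(B)$ is such a subset of $C(X)$, conceals a bipolar/saturation argument that deserves to be written out --- but it isolates a lemma of independent interest: an $\reals$-quotient map \emph{reflects} equicontinuous pointwise bounded families, which meshes naturally with the preservation language of Theorem \ref{Teopreservadecuad} and Corollary \ref{Criterio_L-retracto}, and the $\ell^\infty(B)$ encoding is a clean device for proving it.
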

 
 \begin{proof} Suppose that $p_\#$ is open, and let $f\from Y\to \reals$ be a function such that $f\circ p$ is continuous.
 Let $p_\#\from L(X)\to L(Y)$ and $f_\#\from L(Y)\to \reals$ be the linear extensions of $p$ and $f$. Then 
 $f_\#\circ p_\#=(f\circ p)_\#$ is continuous, and since $p_\#$ is open, $f_\#$ is continuous. Thus, $f=f_\#|Y$ is continuous.
 
 \smallskip
 Conversely, if $p$ is $\reals$-quotient, then, by the continuity,  the subspace $H=\ker p_\#$ is closed. Let $L=L(X)/H$ be the quotient space.
 The space $L$ is locally convex and Hausdorff, hence Tychonoff. Furthermore, there is a continuous bijection $i\from L\to L(Y)$
 such that $p_\#=i\circ \pi$ where $\pi\from L(X)\to L$ is the natural projection. Let us verify that the mapping
  $j=i^{-1}\from L(Y)\to L$ is continuous. It suffices to verify that the restriction $f=j|Y$ is continuous. We have
  $f\circ p=(j\circ p_\#)|X=\pi|X$, so $f\circ p$ is continuous; since $p$ is $\reals$-quotient, it follows that $f$ is
  continuous. Thus, $j$ is continuous, so $i$ is a topological isomorphism, and since $\pi$ is open, $p_\#$ is open.
  \end{proof}

There is a simple characterization of $L$-equivalence of $\reals$-quotient mappings.

\begin{prop}\label{criterio}
Two $\reals$-quotient mappings $f\from X\to Y$ and $g\from Z\to T$ are $L$-equivalent if and only if 
there is a topological isomorphism $i\from L(X)\to L(Z)$ such that $i(\ker f_\#)=\ker g_\#$.
\end{prop}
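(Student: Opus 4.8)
The plan is to take the desired isomorphism to be $i=\varphi$ in both directions and to reduce the whole statement to the first isomorphism theorem for topological vector spaces, the key input being the openness of $f_\#$ and $g_\#$ supplied by Proposition \ref{r-quot-open}.

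First, for the ``only if'' direction, suppose $f\stackrel{\mathit{L}}{\sim} g$, so that there are topological isomorphisms $\varphi\from L(X)\to L(Z)$ and $\psi\from L(Y)\to L(T)$ with $\psi\circ f_\#=g_\#\circ\varphi$. I would simply check that $i=\varphi$ carries $\ker f_\#$ onto $\ker g_\#$: if $f_\#(\alpha)=0$ then $g_\#(\varphi(\alpha))=\psi(f_\#(\alpha))=0$, giving $\varphi(\ker f_\#)\subseteq\ker g_\#$, and reversing the roles with $\varphi^{-1}$ and $\psi^{-1}$ (using $f_\#\circ\varphi^{-1}=\psi^{-1}\circ g_\#$) yields the opposite inclusion. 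This direction is routine and needs no quotient machinery.

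The substance is in the ``if'' direction. Here I would first record that, since $f$ and $g$ are $\reals$-quotient, Proposition \ref{r-quot-open} makes $f_\#\from L(X)\to L(Y)$ and $g_\#\from L(Z)\to L(T)$ open; they are also continuous, linear, and surjective (surjectivity because $f_\#$ restricts to $f$ on $X$, so its image is a linear subspace containing $Y=f(X)$, hence all of $L(Y)$, and likewise for $g_\#$). Consequently each induces a topological isomorphism of its quotient onto the target: writing $\pi_X\from L(X)\to L(X)/\ker f_\#$ for the canonical projection, the induced map $q_X\from L(X)/\ker f_\#\to L(Y)$ with $f_\#=q_X\circ\pi_X$ is a continuous open linear bijection, hence a topological isomorphism, and similarly for $q_Z\from L(Z)/\ker g_\#\to L(T)$ with $g_\#=q_Z\circ\pi_Z$. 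Now, given a topological isomorphism $i\from L(X)\to L(Z)$ with $i(\ker f_\#)=\ker g_\#$, I put $\varphi=i$ and build $\psi$ through the quotients: since $i$ maps $\ker f_\#$ onto $\ker g_\#$, it descends to a topological isomorphism $\bar i\from L(X)/\ker f_\#\to L(Z)/\ker g_\#$ with $\bar i\circ\pi_X=\pi_Z\circ i$, and setting $\psi=q_Z\circ\bar i\circ q_X^{-1}$ gives a topological isomorphism $L(Y)\to L(T)$. The intertwining relation then follows by direct computation, $\psi\circ f_\#=q_Z\circ\bar i\circ q_X^{-1}\circ q_X\circ\pi_X=q_Z\circ\pi_Z\circ i=g_\#\circ i$, which is exactly $\psi\circ f_\#=g_\#\circ\varphi$, witnessing $f\stackrel{\mathit{L}}{\sim} g$.

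The main obstacle---indeed the only nontrivial point---is the passage to the quotient: one must know that $f_\#$ and $g_\#$ are \emph{open}, so that the canonical bijections $q_X,q_Z$ are topological isomorphisms rather than merely continuous bijections, and so that $\bar i$ and hence $\psi$ are continuous in both directions. This is precisely where the $\reals$-quotient hypothesis enters, through Proposition \ref{r-quot-open}; without openness the induced $\psi$ need not be a topological isomorphism, and the equivalence would break down.
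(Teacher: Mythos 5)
Your proof is correct and takes essentially the same route as the paper's: the forward direction is the same kernel chase, and the backward direction rests on exactly the same key input, namely the openness of $f_\#$ and $g_\#$ supplied by Proposition \ref{r-quot-open}, used to pass to the quotients $L(X)/\ker f_\#$ and $L(Z)/\ker g_\#$. The only difference is presentational: where you explicitly introduce the canonical isomorphisms $q_X$, $q_Z$ and the descended map $\bar i$, the paper compresses this into the identifications $L(Y)=L(X)/\ker f_\#$ and $L(T)=L(Z)/\ker g_\#$ and verifies continuity of the induced map $j$ and of $j^{-1}$ directly from the openness of $f_\#$ and $g_\#$.
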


\begin{proof}
If $f$ and $g$ are $L$-equivalent, then there are topological isomorphisms $i\from L(X)\to L(Z)$ and 
$j\from L(Y)\to L(T)$ such that $j\circ  f_\#=g_\#\circ i$. Let $A=\ker f_\#$ and $B=\ker g_\#$. Then 
$j\circ f_\#(A)=\{0\}$, and by  $j\circ f_\#=g_\# \circ i$ we have to $\{0\}=g_\#\circ i(A)=g_\#(i(A))$, that is,
$i(A)\subset B$. For $g_\#=j\circ f_\#\circ i^{-1}$, we obtain that $\{0\}=g_\#(B)=j\circ f_\#\circ i^{-1}(B)$,
considering that $j$ is bijective we have that $f_\#\circ i^{-1}(B)=0$, hence, $i^{-1}(B)\subset A$, and this is enough
to establish the equality.

 \smallskip
Conversely, suppose that there is a topological isomorphism $i\from L(X)\to L(Z)$ such that $i(\ker f_\#)=\ker g_\#$.
Then there is an (algebraic) isomorphism $j\from L(Y)=L(X)/\ker f_\#\to L(T)=L(Z)/\ker g_\#$ such that
$j\circ f_\#=g_\#\circ i$. Since $g_\#$ and $i$ are continuous and $f_\#$ is open, $j$ is continuous.
Similarly, $j^{-1}\circ g_\#=f_\#\circ i^{-1}$, $f_\#$ and $i^{-1}$ are continuous, and $g_\#$ is open, so $j^{-1}$
is continuous. Thus, $i$ and $j$ are topological isomorphisms as required in the definition of $L$-equivalent mappings.
\end{proof}

Continuing with the $\reals$-quotient mappings, we will define the $\reals$-quotient spaces. Let $p\from X\to Y$
be a mapping of $X$ onto a set $Y$, It is known that there is a unique completely regular topology on the set $Y$ that makes $p$ a 
$\reals$-quotient mapping (this topology may be described as the weakest topology with respect to which all real-valued
functions on $Y$ with continuous compositions with $p$ are continuous). This topology is called the {\it $\reals$-quotient
topology}, and $Y$ endowed with this topology is the {\it $\reals$-quotient space with respect to the mapping $p$}
(or simply the {\it $\reals$-quotient space} if the mapping $p$ is clear from the context).
In this situation we say that  $p$ is {\it the natural mapping}.

Now, if $X$ if a space and $K$ is a closed set in $X$, let us denote $X/K=(X\setminus K) \cup \{K\}$, and let 
$p(x)=x$ for $x\in X\setminus K$, and $p(x)=K$ for each $x\in K$. Therefore, there is only one completely regular topology
on $X/K$ that makes it the $\reals$-quotient space with respect to $p$. It is shown in \cite{\refOkunevb} that this space
is Tychonoff. Also note that $p|(X\setminus K)\from X\setminus K \to X/K\setminus p(K)$ is a homeomorphism 
\cite[Corollary 1.7]\refOkunevb.

With all this we can establish our method:

\begin{theorem}\label{Principal}
If $K_1$ and $K_2$ are parallel $L$-retracts of $X$, then the $\reals$-quotient mappings $p_1\from X\to X/K_1$ and
 $p_2\from X\to X/K_2$ are $L$-equivalent. In particular, the spaces $X/K_1$ and $X/K_2$ are $L$-equivalent.
\end{theorem}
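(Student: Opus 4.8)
The plan is to reduce the statement about the $\reals$-quotient mappings $p_1$ and $p_2$ to the criterion for $L$-equivalence of $\reals$-quotient mappings established in Proposition~\ref{criterio}. By that proposition, since $p_1\from X\to X/K_1$ and $p_2\from X\to X/K_2$ are both $\reals$-quotient mappings sharing the same domain $X$, it suffices to exhibit a topological isomorphism $i\from L(X)\to L(X)$ carrying $\ker (p_1)_\#$ onto $\ker (p_2)_\#$. The "in particular" clause then follows automatically, because the $L$-equivalence of the two mappings entails the $L$-equivalence of their ranges $X/K_1$ and $X/K_2$, as noted in the discussion preceding the definition of $L$-equivalence for mappings.

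First I would identify the kernels $\ker (p_1)_\#$ and $\ker (p_2)_\#$ explicitly. Since $p_j$ collapses $K_j$ to a point and is a homeomorphism on $X\setminus K_j$, the kernel of $(p_j)_\#\from L(X)\to L(X/K_j)$ should be exactly the subspace $L^0(K_j)$ of $L(K_j)$ consisting of combinations of elements of $K_j$ whose coefficients sum to zero; that is, the image under $(p_j)_\#$ of a point of $K_j$ is the single collapsed point, so any difference $\delta_a-\delta_b$ with $a,b\in K_j$ lies in the kernel, and conversely an element in the kernel must be supported on $K_j$ with vanishing total coefficient. I would verify this description using the characterization of $L^0$ given before Proposition~\ref{Prop-especial}, together with the fact that $(p_j)_\#$ restricted to $L(X\setminus K_j)$ is injective.

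Next I would use the hypothesis that $K_1$ and $K_2$ are parallel $L$-retracts to produce the isomorphism $i$. By the proposition characterizing parallel $L$-retracts, there is a continuous linear retraction $r_1\from L(X)\to L(K_1)$ whose restriction to $L(K_2)$ is a topological isomorphism onto $L(K_1)$, with inverse $j=i^{-1}$ realized as $r_2|L(K_1)$. The idea is to build an automorphism of $L(X)$ that fixes $L(X\setminus (K_1\cup K_2))$ suitably and interchanges the role of $L(K_1)$ and $L(K_2)$ via these retractions, in such a way that $L^0(K_1)$ is mapped onto $L^0(K_2)$. Concretely, I expect the retractions $r_1,r_2$ satisfying $r_1\circ r_2=r_1$ and $r_2\circ r_1=r_2$ to let me define $i=\id_{L(X)}-r_1+i_0\circ r_1$ or a similar combination, where $i_0$ is the isomorphism $L(K_1)\to L(K_2)$; the parallelism relations are precisely what guarantee that such a map is a well-defined topological automorphism of $L(X)$ and carries one kernel onto the other.

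The main obstacle I anticipate is constructing the automorphism $i$ of the whole space $L(X)$ (not merely matching the retract summands) and checking that it sends $\ker (p_1)_\#=L^0(K_1)$ precisely onto $\ker (p_2)_\#=L^0(K_2)$, rather than merely into it. The parallelism conditions $r_1\circ r_2=r_1$ and $r_2\circ r_1=r_2$ are the crucial algebraic input here: they ensure the two retractions are "compatible" so that the induced map between the retract copies extends coherently to an automorphism fixing the complementary directions. Once $i$ is in hand and the kernel equality is verified, the conclusion is immediate from Proposition~\ref{criterio}, and the $L$-equivalence of $X/K_1$ and $X/K_2$ follows as a special case.
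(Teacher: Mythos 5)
Your skeleton matches the paper's: reduce to Proposition~\ref{criterio}, identify $\ker (p_j)_\#=L^0(K_j)$, and build a topological automorphism of $L(X)$ from the parallel retractions. But there is a genuine gap at the decisive step. Your candidate $i=\id_{L(X)}-r_1+i_0\circ r_1$, with $i_0=r_2|L(K_1)$ the isomorphism furnished by parallelism, equals $\id_{L(X)}-r_1+r_2$ (because $r_2\circ r_1=r_2$). This map is indeed a topological automorphism (its inverse is $\id_{L(X)}-r_2+r_1$), and it carries $L(K_1)$ onto $L(K_2)$ and $\ker r_1$ onto $\ker r_2$. It does \emph{not}, however, carry $L^0(K_1)$ onto $L^0(K_2)$ in general: on $L(K_1)$ it acts as $s_2=r_2|L(K_1)$, and nothing in the parallelism relations forces $s_2$ to respect the coefficient-sum functionals, i.e.\ to satisfy $(e_{K_2})_\#\circ s_2=(e_{K_1})_\#$. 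A concrete failure: let $X=\{a_1,a_2,b_1,b_2\}$ be discrete, $K_1=\{a_1,a_2\}$, $K_2=\{b_1,b_2\}$, and define retractions on the Hamel basis by $r_2(\delta_{a_1})=2\delta_{b_1}$, $r_2(\delta_{a_2})=\delta_{b_2}$, $r_1(\delta_{b_1})=\frac{1}{2}\delta_{a_1}$, $r_1(\delta_{b_2})=\delta_{a_2}$, with $r_1|L(K_1)$ and $r_2|L(K_2)$ the identities. One checks $r_1\circ r_2=r_1$ and $r_2\circ r_1=r_2$, so these are parallel; yet $s_2(\delta_{a_1}-\delta_{a_2})=2\delta_{b_1}-\delta_{b_2}\notin L^0(K_2)$. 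So your automorphism moves an element of $\ker(p_1)_\#$ outside $\ker(p_2)_\#$, Proposition~\ref{criterio} cannot be applied, and your assertion that ``the parallelism relations are precisely what guarantee \dots carries one kernel onto the other'' is exactly the point that fails.

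The missing idea is the paper's appeal to special isomorphisms (Proposition~\ref{Prop-especial}). Since $s_2$ already witnesses $L(K_1)\cong L(K_2)$, that proposition produces a topological isomorphism $k\from L(K_1)\to L(K_2)$ with $(e_{K_2})_\#\circ k=(e_{K_1})_\#$, hence $k(L^0(K_1))=L^0(K_2)$. The paper then glues $k$ to an isomorphism of the complementary summands: writing $L(X)\cong L(K_1)\times\ker r_1$ via $\alpha\mapsto(r_1(\alpha),\alpha-r_1(\alpha))$, it defines $\psi(\alpha)=k(r_1(\alpha))+j(\alpha-r_1(\alpha))$, where $j$ is a topological isomorphism from $\ker r_1$ onto $\ker r_2$ (the paper takes $j$ to be the restriction of the involution $r_1+r_2-\id$; note that parallelism actually gives $\ker r_1=\ker r_2$, so $j=\id$ also works, which is your formula with $i_0$ replaced by $k$). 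Since $\psi|L(K_1)=k$, this automorphism does satisfy $\psi(L^0(K_1))=L^0(K_2)$, and Proposition~\ref{criterio} concludes the proof. So your construction is salvageable, but only after importing this correction of $s_2$ to a sum-preserving isomorphism, which is the substantive second half of the paper's argument and is absent from your proposal.
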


\begin{proof}
Let $r_1\from L(X)\to L(K_1)$ and $r_2\from L(X)\to L(K_2)$ be parallel $L$-retractions. We define a mapping
$i\from L(X)\to L(X)$ by the rule $i(\alpha)=r_1(\alpha)+r_2(\alpha)-\alpha$ for all $\alpha\in L(X)$. Clearly,
$i$ is linear and continuous. Moreover, $i\circ i(\alpha)=\alpha$, that is, $i$ is its own inverse, so $i$ is a topological
isomorphism. Let us put $s_2=r_2|L(K_1)$, then $s_2$ is a topological isomorphism such that $s_2\circ r_1=r_2\circ i$.
It follows that $i(L(K_1))=L(K_2)$ and that $i(\ker r_1)=\ker r_2$.

Clearly, $\ker (p_i)_\#=L^0(K_i)=\ker (e_{K_i})_\#$, $i=1,2$. Since $K_1$ and $K_2$ are $L$-equivalent, there is
a special topological isomorphism $k\from L(K_1)\to L(K_2)$ such that $(e_{K_2})_\#\circ k=(e_{K_1})_\#$. Let
$g=k\times j$ where $j=i|\ker r_1$. If $\alpha\in L(X)$, then  the mappings $\eta_i\from L(X)\to L(K_i)\times \ker r_i$,
$i=1,2$, defined by $\eta_i(\alpha)=(r_i(\alpha),\alpha-r_i(\alpha))$ are topological isomorphism whose inverses
are $\xi_i\from L(K_i)\times \ker r_i\to L(X)$ are $\xi{}(\alpha,\beta)=\alpha+\beta$, $i=1,2$. Defining a mapping $\psi$
by
\begin{equation*}
\psi(\alpha)=\zeta_2\circ g\circ \eta_1(\alpha)=\zeta_2\circ g(r_1(\alpha),\alpha-r_1(\alpha))
=\zeta_2(k(r_1(\alpha)), j(\alpha-r_1(\alpha)))=k(r_1(\alpha))+j(\alpha)-j(r_1(\alpha)),
\end{equation*}
we obtain a topological isomorphism such that $\psi(L^0(K_1))=L^0(K_2)$. Thus, by Proposition \ref{criterio}, 
$p_1$ is $L$-equivalent to $p_2$.
\end{proof}

\begin{coro}\label{Corolario20}
Let $X$ be a topological space and $K\subset X$ an $L$-retract of $X$. Then the spaces $X^+$ and $X/K\oplus K$ are 
$L$-equivalent.
\end{coro}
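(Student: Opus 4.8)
The plan is to realize $X^+$ and $X/K\oplus K$ as the two $\reals$-quotients produced by Theorem \ref{Principal}, applied to a suitable ambient space carrying two parallel $L$-retracts. Let $K'$ be a homeomorphic copy of $K$ disjoint from $X$; fix a homeomorphism and let $\phi\from L(K')\to L(K)$ be the induced topological isomorphism. Set $Z=X\oplus K'$. By Corollary \ref{natiso}, $L(Z)=L(X)\oplus L(K')$, and I will take as my two retracts $K_1=K'$ and $K_2=K$ (the latter regarded as a subset of $X\subseteq Z$).

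First I would check that $K_1$ and $K_2$ are $L$-retracts of $Z$. Since $K$ is an $L$-retract of $X$, Corollary \ref{Criterio_L-retracto} gives a continuous linear retraction $r\from L(X)\to L(K)$. Using the decomposition $L(Z)=L(X)\oplus L(K')$, define
\[
r_2(\alpha+\beta)=r(\alpha)+\phi(\beta), \qquad r_1(\alpha+\beta)=\phi^{-1}(r(\alpha))+\beta
\]
for $\alpha\in L(X)$ and $\beta\in L(K')$. Both maps are continuous and linear; $r_2$ restricts to the identity on $L(K)$ and $r_1$ restricts to the identity on $L(K')$, so they are retractions onto $L(K_2)$ and $L(K_1)$ respectively.

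Next I would verify that these retractions are parallel. A direct computation using $r|L(K)=\id$ and $\phi\circ\phi^{-1}=\id=\phi^{-1}\circ\phi$ gives, for $\gamma=\alpha+\beta$,
\[
r_1(r_2(\gamma))=\phi^{-1}(r(\alpha))+\beta=r_1(\gamma), \qquad r_2(r_1(\gamma))=r(\alpha)+\phi(\beta)=r_2(\gamma),
\]
so $r_1\circ r_2=r_1$ and $r_2\circ r_1=r_2$; thus $K_1$ and $K_2$ are parallel $L$-retracts of $Z$. (Consistently with the preceding characterization, $r_1|L(K_2)=\phi^{-1}$ is a topological isomorphism of $L(K_2)$ onto $L(K_1)$.)

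Finally I would identify the two $\reals$-quotients. Since $K'$ is a clopen summand of $Z$, collapsing it produces an isolated point, so the $\reals$-quotient $Z/K_1$ is homeomorphic to $X^+$; and since $X$ and $K'$ are clopen summands, collapsing $K\subseteq X$ affects only the $X$-summand and leaves $K'$ untouched, so $Z/K_2$ is homeomorphic to $(X/K)\oplus K'\cong X/K\oplus K$. Theorem \ref{Principal} then yields that $Z/K_1$ and $Z/K_2$ are $L$-equivalent, i.e. $X^+$ and $X/K\oplus K$ are $L$-equivalent. The step to get right is the construction of the ambient space $Z$ together with the mutually inverse ``twist'' by $\phi$ that makes the two retractions parallel; once that is in place, both the parallel identities and the identification of the quotients (which rests on $X$ and $K'$ being clopen in $Z$) are routine.
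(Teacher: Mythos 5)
Your proof is correct and follows essentially the same route as the paper's own argument: the same auxiliary space $Z=X\oplus K'$, the same pair of retractions twisted by the isomorphism induced by the homeomorphism $K\cong K'$ (only with the labels $r_1$ and $r_2$ interchanged), the same verification of parallelism, and the same identification of $Z/K'$ with $X^+$ and $Z/K$ with $X/K\oplus K$ before invoking Theorem \ref{Principal}. No gaps; the computations confirming $r_1\circ r_2=r_1$ and $r_2\circ r_1=r_2$ check out.
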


\begin{proof}
Let $K'$ be a homeomorphic copy of $K$ disjoint from $X$ and $\varphi\from K_1\to K'$ a homeomorphism.
Put $Z=X\oplus K'$, Then $L(Z)$ is topologically isomorphic to $L(X)\oplus L(K')$.
Let $r\from L(X)\to L(K)$ be an $L$-retraction. Define $r_1\from L(Z)\to L(K)$ by putting $r_1|L(X)=r$ and 
$r_1|L(K')=\varphi_\#^{-1}$ and $r_2\from L(Z)\to L(K')$ by putting $r_2|L(X)=\varphi_\#\circ r$ and 
$r_2| L(K')=\id_{L(K')}$. Then $(r_1\circ r_1)| L(X)=r_1\circ r=r=r_1|L(X)$ and $(r_1\circ r_1)| L(K')=
r_1\circ \varphi_\#^{-1}=\varphi_\#^{-1}$ (because $r_1|L(K)$ is the identity), so $(r_1\circ r_1)|L(K')=r_1|L(K')$.
We conclude that $r_1\circ r_1=r_1$, so $r_1$ is a retraction. Similarly, $(r_2\circ r_2)|L(X)=r_2\circ \varphi_\#\circ r|L(X)
=\varphi_\# \circ r=r_2|L(X)$, because $r_2|L(K')$ is the identity, and $(r_2\circ r_2)| L(K')=r_2|L(K')$. Thus, 
$r_2\circ r_2=r_2$, and $r_2$ is a retraction.

Furthermore, $(r_1\circ r_2)|L(X)=r_1\circ \varphi\circ r=\varphi^{-1}\circ\varphi\circ r=r=r_1|L(X)$ and 
$(r_1\circ r_2)|L(K')=r_1$ because $r_2|L(K')$ is the identity. Thus, $r_1\circ r_2=r_1$. Similarly, 
$(r_2\circ r_1)|L(X)=r_2\circ r=\varphi_\#\circ r=\varphi_\#\circ r=r_2|L(X)$ and 
$(r_2\circ r_1)|L(K')=(r_2|L(K))\circ\varphi_\#^{-1}=(\varphi_\#\circ r)|L(K)\circ\varphi_\#^{-1}=
\varphi_\#\circ\varphi_\#^{-1}=\id_{L(K')}=r_2|L(K')$, so $r_2\circ r_1=r_2$. Thus $r_1$ and $r_2$ are parallel
$L$-retractions. By Theorem \ref{Principal}, the spaces $Z/K$ and $Z/K'$ are $L$-equivalent. Clearly, $Z/K$ is homeomorphic 
to $X/K\oplus K$ and $Z/K'$ is homeomorphic to $X^+$.
\end{proof}

Note that in the proof of Theorem \ref{Principal}, the fact that the $L$-retracts are parallel served to guarantee 
the existence of a pair of topological isomorphisms $s_2$ and $i$ such that $s_2\circ r_1=r_2\circ i$.
Therefore, in the case that two sets $K_1$ and $K_2$ are $L$-retracts of $X$ and there are topological isomorphisms
$i\from L(X)\to L(X)$, $j\from L(K_1)\to L(K_2)$ and continuous linear retractions $r_1\from L(X)\to L(K_1)$ and 
$r_2\from L(X)\to L(K_2)$ such that $j\circ r_1=r_2\circ i$ we will say that these sets are {\it equivalent $L$-retracts}.

\begin{prop}\label{PropGLigualkernel}
Let $r\from L(X)\to L(K)$ be a continuous linear retraction, where $K\subset X$. Then $L(X)$ is topologically isomorphic to
$GL(X/K)\times L(K)$, and $GL(X/K)$ is topologically isomorphic to $\ker r$.
\end{prop}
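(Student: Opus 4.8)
The plan is to split $L(X)$ using the retraction and then recognise $\ker r$ as the Graev space of the quotient $X/K$. First I would note that the existence of the continuous linear retraction $r$ forces $L(K)$ to be a subspace of $L(X)$ and $K$ to be an $L$-retract, hence closed, so that $X/K$ and the natural $\reals$-quotient mapping $p\from X\to X/K$ are defined. Exactly as in the proof of Theorem \ref{Principal}, the assignment $\alpha\mapsto(r(\alpha),\alpha-r(\alpha))$ is a topological isomorphism of $L(X)$ onto $L(K)\times\ker r$, so $L(X)\cong L(K)\times\ker r$. Granting the second assertion $\ker r\cong GL(X/K)$, the first assertion $L(X)\cong GL(X/K)\times L(K)$ is then immediate, so the whole content is to identify $\ker r$ with $GL(X/K)$.

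Next I would analyse $p_\#\from L(X)\to L(X/K)$. Since $p$ is $\reals$-quotient, Proposition \ref{r-quot-open} shows that $p_\#$ is open; it is a continuous linear surjection, and a direct computation on the Hamel basis $\delta_X(X)$ gives $\ker p_\#=L^0(K)$ (an element $\sum\lambda_x\delta_x$ dies under $p_\#$ precisely when its coefficients off $K$ vanish and its coefficients on $K$ sum to $0$). Fixing $y_0\in K$ and writing $*=p(y_0)=p(K)$, I would use $L(K)=L^0(K)\oplus\reals\delta_{y_0}$ to regroup the splitting above as $L(X)=L^0(K)\oplus(\reals\delta_{y_0}\oplus\ker r)$. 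Thus $\reals\delta_{y_0}\oplus\ker r$ is a topological complement of $\ker p_\#$, so the open surjection $p_\#$ carries it topologically isomorphically onto $L(X/K)$, respecting the internal direct sum; in particular $\ker r\cong p_\#(\ker r)$ and $L(X/K)=\reals\delta_*\oplus p_\#(\ker r)$ as a topological direct sum.

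The key observation is that $L(X/K)$ admits a second topological direct sum decomposition with the same one-dimensional summand: since $(e_{X/K})_\#$ is continuous and sends $\delta_*$ to $1$, we have $L(X/K)=L^0(X/K)\oplus\reals\delta_*$. Two topological complements of one and the same subspace are topologically isomorphic, the isomorphism being the projection along that subspace restricted to each, so $p_\#(\ker r)\cong L^0(X/K)$. Combining this with the proposition identifying $L^0(X/K)$ with $GL(X/K,*)$ (and the base-point independence that justifies the notation $GL(X/K)$), I obtain $\ker r\cong p_\#(\ker r)\cong L^0(X/K)\cong GL(X/K)$, which settles both claims.

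I expect the main obstacle to be exactly this matching step. A naive argument only yields $\reals\times\ker r\cong L(X/K)\cong\reals\times GL(X/K)$, and from such an isomorphism one cannot legitimately cancel the factor $\reals$ in the category of locally convex spaces. The essential device is to arrange both complements around the \emph{same} distinguished line $\reals\delta_*$ inside $L(X/K)$, so that the desired isomorphism is produced by a single continuous projection rather than by an illicit cancellation; the care needed lies in checking that both splittings are genuinely topological (not merely algebraic) and that they share this line.
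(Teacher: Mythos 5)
Your proof is correct, but it takes a genuinely different route from the paper's. For the key isomorphism $GL(X/K)\cong\ker r$, the paper exhibits two open continuous linear surjections out of $L(X)$ with the \emph{same} kernel $L(K)$: the extension $\theta_\#\from L(X)\to GL(X/K)$ of $\theta(x)=p(x)$ (the Graev space taken with base point $p(K)$, so $\theta$ annihilates $K$), and the map $\psi(\alpha)=\alpha-r(\alpha)$ onto $\ker r$; both targets are then identified with the quotient $L(X)/L(K)$. You instead work with the Markov extension $p_\#\from L(X)\to L(X/K)$, whose kernel is the smaller space $L^0(K)$, and you absorb the resulting one-dimensional mismatch by producing two topological direct-sum decompositions of $L(X/K)$ over the common line $\reals\delta_*$, namely $\reals\delta_*\oplus p_\#(\ker r)$ and $\reals\delta_*\oplus L^0(X/K)$, then invoking that two topological complements of the same subspace are topologically isomorphic together with the paper's earlier proposition $L^0(X/K)\cong GL(X/K)$. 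Each step of yours checks out: the splitting $\alpha\mapsto(r(\alpha),\alpha-r(\alpha))$, the computation $\ker p_\#=L^0(K)$, the fact that an open continuous surjection restricts to a topological isomorphism on any topological complement of its kernel, and the complement-matching lemma are all sound. Your route buys two things: Proposition \ref{r-quot-open} applies to $p_\#$ verbatim, whereas the paper's claim that $\theta_\#$ is open strictly speaking needs a Graev analogue of that proposition (or a factorization of $\theta_\#$ through $p_\#$ and the canonical projection $L(X/K)\to GL(X/K)$); and you get the first assertion $L(X)\cong GL(X/K)\times L(K)$ for free from the second plus the retraction splitting, whereas the paper derives it from Corollary \ref{Corolario20}, i.e., from the full parallel-retract machinery of Theorem \ref{Principal}, making your argument more self-contained. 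What the paper's route buys is brevity: quotienting by $L(K)$ twice avoids the bookkeeping with the distinguished line. Finally, your closing caveat about not cancelling the factor $\reals$ is exactly the pitfall the paper also steers around -- it is why its proof does not stop at the observation $L(K)\times\ker r\cong L(K)\times GL(X/K)$.
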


\begin{proof}
The first part follows from the fact that $X^{+}$ is $L$-equivalent to $X/K \oplus K$, therefore, applying 
Corollary \ref{CoroMarkovGraev} we obtain that $GL(X^{+})$ is topologically isomorphic to $GL(X/K\oplus K)$, 
that is $L(X)$ is topologically isomorphic to $GL(X/K)\oplus L(K)$ (Corollary \ref{Coro-GL y P}).

We will write  $L\cong E$ if the topological linear spaces $L$ and $E$ are topologically isomorphic.
The second part is due to the observation that if $r\from L(X)\to L(K)$ is a continuous linear retraction, then
$L(X)\cong L(K)\times \ker r$. Thus, $L(K)\times \ker r \cong L(K)\oplus GL(X/K) \cong L(K)\times GL(X/K)$. To end the proof,
note that the function $\theta\from X\to X/K\subset GL(X/K)$ given by $\theta(x)=p(x)$ is $\reals$-quotient, so
$\theta_\#\from L(X)\to GL(X/K)$ is open and onto. Since $\ker \theta_\#=L(K)$, we have $L(X)/L(K)\cong GL(X/K)$.
On the other hand, the function $\psi\from L(X)\to \ker r$ given by $\psi(\alpha)=\alpha-r(\alpha)$ is linear continuous,
open and its kernel is $L(K)$.  Thus, $L(X)/L(K)\cong \ker r$. We conclude that $GL(X/K)\cong \ker r$.
\end{proof}

In a way, given an $L$-retraction $r\from L(X)\to L(K)$, we can obtain enough information about $L(X)$ from $L(K)$,
then, as a corollary of the previous proposition, we can obtain that $L(X)$ is topologically isomorphic
to $\ker p_\#\oplus \ker r \oplus \reals$, where $p_\#$ it is the linear continuous extension of the natural mapping
$p\from X\to X/K$. This motivates the following statements:

\begin{prop}\label{p-equiv}
Let $K_1$ and $K_2$ be two $L$-retracts of $X$. If the natural mappings $p_1\from X\to X/K_1$ and $p_2\from X\to X/K_2$
are $L$-equivalent, then $K_1$ and $K_2$ are equivalent $L$-retracts.
\end{prop}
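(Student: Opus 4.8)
The plan is to reduce the statement to producing a single topological isomorphism of $L(X)$ that carries $L(K_1)$ onto $L(K_2)$, and then to manufacture all the remaining data by a direct-sum construction. The hypothesis $p_1 \stackrel{\mathit{L}}{\sim} p_2$ splits into two separate consequences, one about the retracts $K_i$ and one about the quotients $X/K_i$, and I would extract both before recombining them.

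First I would apply Proposition \ref{criterio} to the $\reals$-quotient mappings $p_1$ and $p_2$ (which share the domain $X$) to obtain a topological isomorphism $i_0\from L(X)\to L(X)$ with $i_0(\ker (p_1)_\#)=\ker (p_2)_\#$. Recalling, as noted in the proof of Theorem \ref{Principal}, that $\ker (p_i)_\#=L^0(K_i)$, the restriction $i_0|L^0(K_1)$ is a topological isomorphism $L^0(K_1)\to L^0(K_2)$; composing with the isomorphisms $L^0(K_i)\cong GL(K_i)$ established above gives $GL(K_1)\cong GL(K_2)$, and Corollary \ref{CoroMarkovGraev} upgrades this to a topological isomorphism $j_0\from L(K_1)\to L(K_2)$. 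Separately, the very definition of $L$-equivalence of the mappings $p_1$ and $p_2$ supplies a topological isomorphism $L(X/K_1)\to L(X/K_2)$, and a second application of Corollary \ref{CoroMarkovGraev} yields $GL(X/K_1)\cong GL(X/K_2)$.

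Next I would fix a continuous linear retraction $r_1\from L(X)\to L(K_1)$ (it exists because $K_1$ is an $L$-retract) together with any continuous linear retraction $\rho\from L(X)\to L(K_2)$, giving topological direct sums $L(X)=L(K_1)\oplus \ker r_1=L(K_2)\oplus \ker \rho$. By Proposition \ref{PropGLigualkernel} we have $\ker r_1\cong GL(X/K_1)$ and $\ker \rho\cong GL(X/K_2)$, so the previous paragraph furnishes a topological isomorphism $m\from \ker r_1\to \ker \rho$. I then define $i\from L(X)\to L(X)$ as the direct sum of $j_0$ and $m$ relative to these decompositions, i.e. $i(\beta+\gamma)=j_0(\beta)+m(\gamma)$ for $\beta\in L(K_1)$ and $\gamma\in \ker r_1$; it is a topological isomorphism with $i(L(K_1))=L(K_2)$ and $i(\ker r_1)=\ker \rho$. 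Putting $r_2:=\rho$ and $j:=j_0$, a one-line check on $\alpha=\beta+\gamma$ gives $j(r_1(\alpha))=j_0(\beta)=r_2(i(\alpha))$, since $j_0(\beta)\in L(K_2)$ and $m(\gamma)\in \ker \rho=\ker r_2$; thus $j\circ r_1=r_2\circ i$, which is precisely the data certifying that $K_1$ and $K_2$ are equivalent $L$-retracts.

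The main obstacle is obtaining the isomorphism $L(K_1)\cong L(K_2)$. One is tempted to get it by ``cancellation'' from $L(X)\cong L(K_i)\oplus GL(X/K_i)$ together with $GL(X/K_1)\cong GL(X/K_2)$, but cancellation fails for free locally convex spaces in general, so this route is not available. The resolution is to use the kernel condition of Proposition \ref{criterio} to reach the retracts \emph{directly} through $L^0(K_1)\cong L^0(K_2)$, independently of the quotient isomorphism. In other words, the two facets of $p_1\stackrel{\mathit{L}}{\sim} p_2$ — its kernel condition and its codomain isomorphism — must be exploited separately and only then recombined through the two retraction-induced direct-sum decompositions of $L(X)$.
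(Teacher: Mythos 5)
Your proof is correct and takes essentially the same route as the paper's: both arguments extract $L(K_1)\cong L(K_2)$ from the kernel condition $i(L^0(K_1))=L^0(K_2)$ (via $L^0\cong GL$ and Corollary \ref{CoroMarkovGraev}), obtain $\ker r_1\cong\ker r_2$ from the quotient isomorphism through Proposition \ref{PropGLigualkernel}, and then splice these two isomorphisms together using the direct-sum decompositions $L(X)=L(K_i)\oplus\ker r_i$ to produce the intertwined pair witnessing equivalence. The only cosmetic difference is that you invoke Proposition \ref{criterio} to get the kernel-preserving automorphism of $L(X)$, whereas the paper reads it off directly from the defining relation $j\circ (p_1)_\#=(p_2)_\#\circ i$.
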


\begin{proof}
Since the natural mappings $p_1$ and $p_2$ are $L$-equivalent, there are topological isomorphisms $i\from L(X)\to L(X)$ and
$j\from L(X/K_1)\to L(X/K_2)$ such that $j\circ (p_1)_\#=(p_2)_\#\circ i$. In view of the assumption that $X/K_1$ is 
$L$-equivalent to $X/K_2$, we have that $\ker r_1$ is topologically isomorphic to $\ker r_2$; let us denote by $t$ 
such a topological isomorphism. Also, from $i(L^0(K_1))=i(\ker (p_1)_\#)=\ker (p_2)_\#=L^0(K_2)$, we obtain that $L(K_1)$ is
topologically isomorphic to $L(K_2)$; let $k$ be such an isomorphism. Then $w=k\times t$ is a topological isomorphism
between $L(K_1)\times \ker r_1$ and $L(K_2)\times \ker r_2$. Thus, we have a topological isomorphism $\varphi\from L(X)\to L(X)$
given by the formulas:
\begin{equation*}
\varphi(a)=\zeta_2\circ w\circ \eta_1(a)=\zeta_2\circ w(r_1(a),a-r_{1}(a))=\zeta_2(k(r_1(a)), t(a)-t(r_1(a))=k(r_1(a))+
t(a)-t(r_1(a)).
\end{equation*}

We quickly notice that under this isomorphism, $\varphi(\ker r_1)=\ker r_2$, so, defining $\phi\from L(K_1)\to L(K_2)$
by $\psi(a)=r_2\circ \phi (r_1^{-1}(a))$ we obtain a topological isomorphism such that $\psi\circ r_1=r_2\circ \phi$,
proving that $K_1$ and $K_2$ are equivalent $L$-retracts.
\end{proof}

\begin{coro}
Let $K_1$ and $K_2$ be $L$-retracts of $X$, and $p_1\from X\to X/K_1$, $p_2\from X\to X/K_2$ the corresponding natural
mappings. The following statements are equivalent:
\begin{enumerate}
\item $K_1$ and $K_2$ are equivalent $L$-retracts;
\item $p_1$ and $p_2$ are $L$-equivalent;
\item $K_1$ is $L$-equivalent to $K_2$, and $X/K_1$ is $L$-equivalent to $X/K_2$.
\end{enumerate}
\end{coro}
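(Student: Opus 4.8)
The plan is to prove the cyclic chain $(1)\Rightarrow(3)\Rightarrow(2)\Rightarrow(1)$. The last implication is already available, being exactly Proposition \ref{p-equiv}, so only the first two require work. Throughout I keep the notation of the proof of Theorem \ref{Principal}: given continuous linear retractions $r_i\from L(X)\to L(K_i)$ (which exist since the $K_i$ are $L$-retracts), I write $\eta_i\from L(X)\to L(K_i)\times\ker r_i$, $\eta_i(\alpha)=(r_i(\alpha),\alpha-r_i(\alpha))$, for the associated topological isomorphisms and $\zeta_i(\beta,\gamma)=\beta+\gamma$ for their inverses, and I use freely that $\ker(p_i)_\#=L^0(K_i)$, as recorded there.

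For $(1)\Rightarrow(3)$, suppose $K_1$ and $K_2$ are equivalent $L$-retracts, witnessed by topological isomorphisms $i\from L(X)\to L(X)$, $j\from L(K_1)\to L(K_2)$ and retractions $r_1,r_2$ with $j\circ r_1=r_2\circ i$. The very existence of $j$ shows that $L(K_1)$ and $L(K_2)$ are topologically isomorphic, i.e.\ $K_1$ is $L$-equivalent to $K_2$. For the quotients I run a short kernel chase: if $\alpha\in\ker r_1$ then $r_2(i(\alpha))=j(r_1(\alpha))=0$, whence $i(\ker r_1)\subseteq\ker r_2$; rewriting the hypothesis as $r_1\circ i^{-1}=j^{-1}\circ r_2$ gives the reverse inclusion, so $i(\ker r_1)=\ker r_2$. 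Thus $\ker r_1$ and $\ker r_2$ are topologically isomorphic, and since $\ker r_i$ is topologically isomorphic to $GL(X/K_i)$ by Proposition \ref{PropGLigualkernel}, Corollary \ref{CoroMarkovGraev} upgrades $GL(X/K_1)\cong GL(X/K_2)$ to $L(X/K_1)\cong L(X/K_2)$, i.e.\ $X/K_1$ is $L$-equivalent to $X/K_2$.

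For $(3)\Rightarrow(2)$ I assemble a single isomorphism of $L(X)$ adapted to the two retractions. Since $K_1$ is $L$-equivalent to $K_2$, Proposition \ref{Prop-especial} supplies a \emph{special} topological isomorphism $k\from L(K_1)\to L(K_2)$ with $(e_{K_2})_\#\circ k=(e_{K_1})_\#$; since $X/K_1$ is $L$-equivalent to $X/K_2$, Corollary \ref{CoroMarkovGraev} and Proposition \ref{PropGLigualkernel} yield a topological isomorphism $t\from\ker r_1\to\ker r_2$. I then set $\Phi=\zeta_2\circ(k\times t)\circ\eta_1\from L(X)\to L(X)$, that is $\Phi(\alpha)=k(r_1(\alpha))+t(\alpha-r_1(\alpha))$, which is a topological isomorphism as a composition of such. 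The decisive point is $\Phi(L^0(K_1))=L^0(K_2)$: for $\alpha\in L^0(K_1)\subseteq L(K_1)$ one has $r_1(\alpha)=\alpha$, so $\Phi(\alpha)=k(\alpha)$, and because $k$ is special, the equivalence $(e_{K_1})_\#(\alpha)=0\Leftrightarrow(e_{K_2})_\#(k(\alpha))=0$ shows $k$ carries $L^0(K_1)$ bijectively onto $L^0(K_2)$. Recalling $\ker(p_i)_\#=L^0(K_i)$, Proposition \ref{criterio} then gives that $p_1$ and $p_2$ are $L$-equivalent.

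I expect the main obstacle to lie in the last verification: an isomorphism of $L(X)$ built from arbitrary $k$ and $t$ need not send the codimension-one hyperplane $L^0(K_1)$ onto $L^0(K_2)$, and matching these kernels is precisely what Proposition \ref{criterio} demands. This is exactly the issue resolved by insisting that $k$ be the \emph{special} isomorphism of Proposition \ref{Prop-especial}; the remaining steps are routine direct-sum bookkeeping around the $r_i$ and the now-standard dictionary relating $\ker r_i$, $GL(X/K_i)$ and $L(X/K_i)$.
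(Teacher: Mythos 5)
Your proposal is correct and takes essentially the same approach as the paper: the substantive work rests on the same tools — Proposition \ref{PropGLigualkernel} with Corollary \ref{CoroMarkovGraev} to identify $\ker r_1\cong\ker r_2$ with the Graev spaces of the quotients, the product decomposition $\zeta_2\circ(k\times t)\circ\eta_1$ with $k$ the special isomorphism from Proposition \ref{Prop-especial}, and Propositions \ref{criterio} and \ref{p-equiv} to pass between kernels, mappings and retracts. The only difference is organizational: you run the cycle $(1)\Rightarrow(3)\Rightarrow(2)\Rightarrow(1)$ and place the main construction in $(3)\Rightarrow(2)$, whereas the paper places it in $(3)\Rightarrow(1)$ and treats $(1)\Leftrightarrow(2)$ as immediate; the underlying construction is identical, and your write-up usefully fills in the steps the paper leaves as ``obvious'' or ``easy to verify.''
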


\begin{proof}
The equivalence between items 1 and 2 is obvious. That 1 implies 3 is easy to verify. Therefore, we will only prove
that the statement 3 implies the statement 1. First, by the hypothesis, we have $GL(X/K_1)\cong GL(X/K_2)$, and accordingly,
due to Proposition \ref{PropGLigualkernel} we have that $\ker r_1$ is topologically isomorphic to $\ker r_2$. Then, using the
technique described in the previous propositions, we obtain topological isomorphisms $i\from L(X)\to L(X)$ and
$j\from L(K_1)\to L(K_2)$ such that $i(\ker r_1)=\ker r_2$ and $j\circ r_1=r_2\circ i$. 
It follows that $K_1$ and $K_2$ are equivalent $L$-retracts.
\end{proof} 

\begin{coro}
Let $r_1\from X\to K_1$ and $r_2\from X\to K_2$ be retractions in $X$, and $p_1\from X\to X/K_1$, $p_2\from X\to X/K_2$
the natural mappings. The following statements are equivalent:
\begin{enumerate}
\item $r_1$ is $L$-equivalent to $r_2$;
\item $p_1$ is $L$-equivalent to $p_2$.
\item $K_1$ is $L$-equivalent to $K_2$, and $X/K_1$ is $L$-equivalent to $X/K_2$.
\end{enumerate}
\end{coro}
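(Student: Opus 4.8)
The plan is to reduce everything to the preceding corollary, which already settles the equivalence of (2) and (3) once we know that $K_1$ and $K_2$ are $L$-retracts. First I would observe that a topological retraction $r_i\from X\to K_i$ extends to a continuous linear retraction $(r_i)_\#\from L(X)\to L(K_i)$: its restriction to $L(K_i)$ is the identity because $r_i|K_i=\id_{K_i}$. Hence $K_1$ and $K_2$ are $L$-retracts of $X$, so the preceding corollary applies verbatim and yields the equivalence of (2) and (3). It therefore only remains to tie statement (1) to the other two.

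For the implication (1)$\Rightarrow$(3), suppose $r_1\stackrel{\mathit{L}}{\sim} r_2$. By definition there are topological isomorphisms $\varphi\from L(X)\to L(X)$ and $\psi\from L(K_1)\to L(K_2)$ with $\psi\circ (r_1)_\#=(r_2)_\#\circ\varphi$. Taking $i=\varphi$, $j=\psi$ and the linear retractions $(r_1)_\#,(r_2)_\#$, this is precisely the assertion that $K_1$ and $K_2$ are equivalent $L$-retracts; the preceding corollary then delivers (3).

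The substantive step is (3)$\Rightarrow$(1), which I would prove by constructing the required isomorphisms by hand, using the complemented decompositions $L(X)=L(K_i)\oplus\ker (r_i)_\#$ afforded by the retractions $(r_i)_\#$ (these are topological direct sums, exactly as with the maps $\eta_i,\xi_i$ in the proof of Theorem \ref{Principal}). From (3), the relation $K_1\stackrel{\mathit{L}}{\sim} K_2$ furnishes a topological isomorphism $\psi\from L(K_1)\to L(K_2)$. From $X/K_1\stackrel{\mathit{L}}{\sim} X/K_2$ together with Corollary \ref{CoroMarkovGraev} I get $GL(X/K_1)\cong GL(X/K_2)$, and Proposition \ref{PropGLigualkernel} identifies each $GL(X/K_i)$ with $\ker (r_i)_\#$, producing a topological isomorphism $t\from \ker (r_1)_\#\to\ker (r_2)_\#$. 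Writing each $\alpha\in L(X)$ uniquely as $\alpha=(r_1)_\#(\alpha)+(\alpha-(r_1)_\#(\alpha))$, I define $\varphi(\alpha)=\psi((r_1)_\#(\alpha))+t(\alpha-(r_1)_\#(\alpha))$. This $\varphi$ is a topological isomorphism of $L(X)$, acting as $\psi$ on the first summand and as $t$ on the second. A direct check, using that $(r_2)_\#$ is the identity on $L(K_2)$ and that $t(\alpha-(r_1)_\#(\alpha))\in\ker (r_2)_\#$, gives $(r_2)_\#\circ\varphi=\psi\circ (r_1)_\#$, which is exactly the relation witnessing $r_1\stackrel{\mathit{L}}{\sim} r_2$.

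The main obstacle is this last implication. The delicate point is that the preceding corollary only guarantees equivalent $L$-retract data for \emph{some} pair of linear retractions, whereas $L$-equivalence of the \emph{mappings} $r_1,r_2$ demands commutation with the \emph{specific} linear extensions $(r_1)_\#,(r_2)_\#$ of the given topological retractions. Proposition \ref{PropGLigualkernel} is what makes the passage possible: it shows that the kernel of any continuous linear retraction onto $L(K_i)$ is topologically isomorphic to $GL(X/K_i)$, so the isomorphism type of $\ker (r_i)_\#$ depends only on $X/K_i$ and not on the choice of retraction. This is precisely what lets me feed the abstract data of (3) into the fixed decompositions coming from $(r_1)_\#$ and $(r_2)_\#$.
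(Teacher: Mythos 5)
Your proof is correct and takes essentially the route the paper intends: the paper states this corollary without a separate proof because, once one notes that the linear extensions $(r_i)_\#$ of topological retractions are continuous linear retractions (so that $K_1,K_2$ are $L$-retracts), it reduces to the preceding corollary, and your explicit construction for (3)$\Rightarrow$(1) — transporting the decomposition $L(X)\cong L(K_1)\times\ker (r_1)_\#$ to $L(X)\cong L(K_2)\times\ker (r_2)_\#$ by $\psi\times t$, with $t$ obtained from Corollary \ref{CoroMarkovGraev} and Proposition \ref{PropGLigualkernel} — is precisely the ``technique described in the previous propositions'' (the maps $\eta_i,\xi_i$ of Theorem \ref{Principal}) that the paper invokes there. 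Your flagged subtlety, that the commutation must be achieved for the specific retractions $(r_1)_\#,(r_2)_\#$ rather than for some pair of linear retractions, is exactly the right point to be careful about, and you resolve it the same way the paper's argument does.
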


\begin{coro}
Two retractions to the same retract are $L$-equivalent.
\end{coro}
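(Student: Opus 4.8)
The plan is to obtain this statement as an immediate consequence of the preceding corollary, which characterizes the $L$-equivalence of two retractions $r_1\from X\to K_1$, $r_2\from X\to K_2$ through the three equivalent conditions listed there. I would apply that corollary in the degenerate case $K_1=K_2=K$, where $K$ is the common retract and $r_1,r_2\from X\to K$ are the two given retractions. It is worth recording at the outset that $K$, being a (topological) retract of the Tychonoff space $X$, is closed and is in particular an $L$-retract, as already observed at the beginning of this section; hence the hypotheses needed for the preceding results are in force and the natural mapping $p\from X\to X/K$ is well defined.

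Next I would verify condition (3) of the preceding corollary in this situation, namely that $K_1$ is $L$-equivalent to $K_2$ and that $X/K_1$ is $L$-equivalent to $X/K_2$. Both verifications are trivial. Since both retractions have the same image, $K_1=K_2=K$, so $L(K_1)=L(K_2)$ and the identity is a topological isomorphism witnessing $K_1\stackrel{\mathit{L}}{\sim}K_2$. Moreover, the $\reals$-quotient space $X/K=(X\setminus K)\cup\{K\}$ is determined solely by the closed set $K\subset X$ and its natural mapping, independently of which retraction onto $K$ we started with; thus $X/K_1=X/K_2=X/K$ as the same space, and again the identity gives $X/K_1\stackrel{\mathit{L}}{\sim}X/K_2$.

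Having verified condition (3), I would invoke the implication (3) $\Rightarrow$ (1) of the preceding corollary to conclude that $r_1$ is $L$-equivalent to $r_2$, which is exactly the assertion. There is no genuine obstacle here: the only point meriting a moment's care is the observation that $X/K$ does not depend on the chosen retraction but only on $K$ as a closed subspace, which is immediate from the definition of the $\reals$-quotient space with respect to the natural mapping. Everything else reduces to the identity isomorphisms on $L(K)$ and $L(X/K)$.
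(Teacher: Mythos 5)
Your proposal is correct and matches the paper's intent: the paper states this as an immediate consequence of the preceding corollary (with no separate proof), obtained exactly as you do by taking $K_1=K_2=K$, noting that condition (3) holds trivially via identity isomorphisms, and invoking the implication (3) $\Rightarrow$ (1). Your added remark that $X/K$ depends only on the closed set $K$ and not on the chosen retraction is the right point to make explicit.
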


\begin{coro}
Let $X$ and $Y$ be two $L$-equivalent spaces, $K_1$ and $K_2$ be retracts respectively of of $X$ and $Y$, 
which are $L$-equivalent and such that $X/K_1$ is $L$-equivalent to $Y/K_2$. Then any two retractions
$X\to K_1$ and $X\to K_2$ are $L$-equivalent, moreover, the corresponding natural mappings are also $L$-equivalent.
\end{coro}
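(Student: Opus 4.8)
The statement is best read with a small correction: since $K_2$ is a retract of $Y$ (not of $X$), the conclusion asserts the $L$-equivalence of any retraction $\rho\from X\to K_1$ with any retraction $\sigma\from Y\to K_2$, and of the natural mappings $p_1\from X\to X/K_1$ and $p_2\from Y\to Y/K_2$. The plan is to build a \emph{single} topological isomorphism $\varphi\from L(X)\to L(Y)$ that simultaneously witnesses both $L$-equivalences, imitating the constructions in the proofs of Theorem \ref{Principal} and Proposition \ref{p-equiv}. The reason the earlier corollary cannot simply be quoted is that $K_1$ and $K_2$ now sit inside different ambient spaces, so instead I would reassemble the transporting isomorphism by hand out of the product decompositions furnished by the two retractions.

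First I would fix the data. Let $r_1=\rho_\#\from L(X)\to L(K_1)$ and $r_2=\sigma_\#\from L(Y)\to L(K_2)$ be the continuous linear retractions extending $\rho$ and $\sigma$ (these are genuine retractions since $\rho|K_1=\id_{K_1}$ forces $r_1|L(K_1)=\id$, and likewise for $r_2$). By Proposition \ref{PropGLigualkernel}, $\ker r_1\cong GL(X/K_1)$ and $\ker r_2\cong GL(Y/K_2)$; and since $X/K_1\stackrel{\mathit{L}}{\sim}Y/K_2$, Corollary \ref{CoroMarkovGraev} gives $GL(X/K_1)\cong GL(Y/K_2)$. Hence there is a topological isomorphism $t\from\ker r_1\to\ker r_2$. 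Next, since $K_1\stackrel{\mathit{L}}{\sim}K_2$, Proposition \ref{Prop-especial} supplies a \emph{special} topological isomorphism $k\from L(K_1)\to L(K_2)$, that is, one with $(e_{K_2})_\#\circ k=(e_{K_1})_\#$; this particular choice is what will let a single $\varphi$ also handle the natural mappings.

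Then, using that $r_1$ and $r_2$ are retractions, I would split $L(X)\cong L(K_1)\times\ker r_1$ and $L(Y)\cong L(K_2)\times\ker r_2$ via the isomorphisms $\eta_1(\alpha)=(r_1(\alpha),\alpha-r_1(\alpha))$ and $\zeta_2(\beta,\gamma)=\beta+\gamma$, and set
\begin{equation*}
\varphi=\zeta_2\circ(k\times t)\circ\eta_1\from L(X)\to L(Y),\qquad \varphi(\alpha)=k(r_1(\alpha))+t(\alpha-r_1(\alpha)),
\end{equation*}
which is a topological isomorphism. For the retractions, since $k(r_1(\alpha))\in L(K_2)$ and $t(\alpha-r_1(\alpha))\in\ker r_2$, applying $r_2$ annihilates the second summand, so $r_2\circ\varphi=k\circ r_1$; taking $\psi=k$ this is precisely $\psi\circ\rho_\#=\sigma_\#\circ\varphi$, so $\rho$ and $\sigma$ are $L$-equivalent. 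For the natural mappings, I would recall (as in the proof of Theorem \ref{Principal}) that $\ker(p_i)_\#=L^0(K_i)$. If $\alpha\in L^0(K_1)\subset L(K_1)$ then $r_1(\alpha)=\alpha$, whence $\varphi(\alpha)=k(\alpha)$, and since $k$ is special $k(L^0(K_1))=L^0(K_2)$. Thus $\varphi(\ker(p_1)_\#)=\ker(p_2)_\#$, and Proposition \ref{criterio} yields that $p_1$ and $p_2$ are $L$-equivalent.

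The main obstacle I anticipate is exactly that the two retracts no longer share an ambient space, which blocks a direct appeal to the preceding corollary; the resolution is to rebuild the transporting isomorphism from the product decompositions, where the one delicate step is to take $k$ special so that the single map $\varphi$ serves both conclusions at once. A secondary point to verify with care is the existence of $t$, i.e. that the hypothesis $X/K_1\stackrel{\mathit{L}}{\sim}Y/K_2$ really descends to $\ker r_1\cong\ker r_2$; this is precisely Proposition \ref{PropGLigualkernel} combined with Corollary \ref{CoroMarkovGraev}, and it is also what makes the stated hypothesis $X\stackrel{\mathit{L}}{\sim}Y$ automatic rather than an independent assumption.
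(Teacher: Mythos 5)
Your proof is correct and is essentially the argument the paper intends: the paper states this corollary without proof as an application of the same product-decomposition technique used in Theorem \ref{Principal} and Proposition \ref{p-equiv} --- splitting $L(X)\cong L(K_1)\times\ker r_1$ and $L(Y)\cong L(K_2)\times\ker r_2$, taking $t\from\ker r_1\to\ker r_2$ via Proposition \ref{PropGLigualkernel} and Corollary \ref{CoroMarkovGraev}, taking $k$ special via Proposition \ref{Prop-especial}, and concluding with Proposition \ref{criterio} --- which is exactly what you carry out. Your correction of the typo (the second retraction is $Y\to K_2$) and your observation that the hypothesis $X\stackrel{\mathit{L}}{\sim}Y$ is redundant are both sound.
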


\begin{ex} \rm Consider the retractions $r_1, r_2\from [0,1]\to [0,1/2]$ defined by $r_1(x)=x$ if $x\in [0,1/2]$ and 
$r_1(x)=1-x$ if $x\in [1/2,1]$, $r_2(x)=x$ if $x\in [0,1/2]$ and $r_2(x)=1/2$ if $x\in [1/2,1]$. Theses retractions
are $L$-equivalent and we have that $r_1$ is perfect and open, while $r_2$ it is also perfect but not open.
\end{ex}

\begin{coro}
Being an open mapping is not preserved under the relation of $L$-equivalence, even within the class of perfect retractions.
\end{coro}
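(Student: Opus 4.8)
The plan is to read the statement straight off the preceding example, so the work reduces to checking its four assertions about the pair $r_1, r_2 \from [0,1]\to[0,1/2]$. The first and only substantive step is the $L$-equivalence: since $r_1$ and $r_2$ are both retractions of $[0,1]$ onto the \emph{same} retract $[0,1/2]$, the corollary stating that two retractions to the same retract are $L$-equivalent applies verbatim and yields $r_1 \stackrel{\mathit{L}}{\sim} r_2$. This is where all the real content sits, and it has already been supplied; everything that follows is elementary point-set topology.

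Next I would verify that both maps are perfect. Because $[0,1]$ is compact and $[0,1/2]$ is Hausdorff, every continuous surjection between them is automatically closed, and each fibre, being a closed subset of a compact space, is compact. Hence both $r_1$ and $r_2$ are perfect, so the whole example lives inside the class of perfect retractions, which is exactly the refinement the corollary claims.

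The remaining two verifications are the openness statements, and these are the only place one must actually inspect the maps. For $r_1$, which is the fold $x \mapsto \min\{x, 1-x\}$, the only candidate for failure of openness is the fold point $1/2$; I would take a basic neighbourhood $(1/2-\varepsilon,\, 1/2+\varepsilon)$ and observe that its image is $(1/2-\varepsilon,\, 1/2]$, which is relatively open in $[0,1/2]$ because $1/2$ is the right endpoint, so $r_1$ is open. For $r_2$, which collapses $[1/2,1]$ to the point $1/2$, I would exhibit a single open set witnessing non-openness: the set $(1/2,1]$ is open in $[0,1]$, yet $r_2\bigl((1/2,1]\bigr)=\{1/2\}$, and the singleton $\{1/2\}$ is not open in $[0,1/2]$. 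Thus $r_2$ is not open.

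Putting these together, $r_1$ and $r_2$ are $L$-equivalent perfect retractions, one open and the other not, so openness is not $L$-invariant even within the class of perfect retractions, as required. I expect no genuine obstacle: the one nontrivial ingredient, the $L$-equivalence, is furnished by the earlier corollary, and the only point demanding care is choosing for $r_2$ an open set whose image degenerates to a non-open singleton.
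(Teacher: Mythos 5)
Your proposal is correct and follows exactly the paper's route: the corollary is read off the preceding example of $r_1,r_2\from[0,1]\to[0,1/2]$, with the $L$-equivalence supplied by the corollary that two retractions onto the same retract are $L$-equivalent. Your verifications of perfectness, openness of $r_1$, and non-openness of $r_2$ simply fill in details the paper leaves as assertions.
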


\begin{ex}\rm
In \cite[Theorem 4.2]\refSanchez{} it is shown that there is a weakly pseudocompact, not locally compact space
that has a single non-isolated point, of the form $K=[0,\omega_1]\cup \{A_\alpha: \alpha\in\omega_1\}$. Moreover,
it is proved that $K$ is $L$-equivalent to $Y=K_1\oplus K_2$, where $K_1=[0,\omega_1]$ and $K_2=(K\setminus K_1)\cup 
\{\omega_1\}$. Let $Z$ be a pseudocompact space that contains $Y$ as a closed subspace. We consider the spaces 
$X_1=K\oplus Z$ and $X_2=Y\oplus Z$, and the retractions $r_1\from X_1 \to Z$ such that $r_1|K$ is the embedding of $K$
in $Z$ ($K$ can be seen as a subspace of $Y$) and $r_1|Z$ is the identity; $r_2\from X_2 \to Z$ such that $r_2|Y$ is the
embedding of $Y$ in $Z$ and $r_2|Z$ is the identity. This retractions are $L$-equivalent and they are finite-to-one,
but $r_1$ is perfect and $r_2$ it is not (it is not closed).
\end{ex}

\begin{coro}
Being a closed mapping is not preserved under the relation of $L$-equivalence, even within the class of 
finite-to-one retractions. In particular, being a perfect function is not $L$-invariant.
\end{coro}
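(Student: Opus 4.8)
The plan is to obtain both assertions directly from the construction in the preceding example, so the proof should consist essentially of reading off the relevant properties of the two retractions built there. Recall the data: S\'anchez's theorem provides a weakly pseudocompact, non-locally compact space $K$ that is $L$-equivalent to $Y=K_1\oplus K_2$; one fixes a pseudocompact $Z$ into which $Y$ is suitably embedded, and forms $X_1=K\oplus Z$, $X_2=Y\oplus Z$ together with the retractions $r_1\from X_1\to Z$ and $r_2\from X_2\to Z$ that act as the given embedding on the first summand and as the identity on $Z$. The example already establishes that $r_1\stackrel{\mathit{L}}{\sim} r_2$, that $r_1$ is perfect, and that $r_2$ fails to be closed; my task is only to package these facts and to check the size of the fibers.

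First I would verify that $r_1$ and $r_2$ are finite-to-one. Fix $z\in Z$. On the summand $Z$ the map $r_i$ is the identity, so it contributes the single preimage $z$; on the other summand ($K$ for $r_1$, $Y$ for $r_2$) the map is an embedding, hence injective, so it contributes at most one further preimage. Therefore every fiber of $r_1$ and of $r_2$ has at most two points, and in particular both are finite-to-one retractions, as is needed for the strengthened form of the statement.

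With this settled, the first assertion is immediate: $r_1$ and $r_2$ are $L$-equivalent, $r_1$ is perfect (so in particular closed), and $r_2$ is not closed. Hence closedness is not preserved by $L$-equivalence, and since both maps are finite-to-one retractions this failure already occurs inside that class. For the final sentence, recall that a perfect map is by definition closed; as $r_2$ is not closed it is not perfect, while $r_1$ is, and the two are $L$-equivalent, so being a perfect function is not $L$-invariant either.

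The only genuinely substantive points --- and these are discharged in the example rather than here --- are the $L$-equivalence $r_1\stackrel{\mathit{L}}{\sim} r_2$ and the failure of closedness of $r_2$. The first rests on the chain $K\stackrel{\mathit{L}}{\sim} Y$ from S\'anchez, passing to the sums $X_1\stackrel{\mathit{L}}{\sim} X_2$ (Corollary \ref{natiso}, since $\mathbf L$ respects finite coproducts) and then to the retractions via the earlier results on retractions to $L$-equivalent retracts; the second is the crux, and it reflects the fact that embedding the \emph{split} space $Y=K_1\oplus K_2$ into the pseudocompact $Z$ behaves differently from embedding $K$, so that the retraction onto $Z$ loses closedness precisely when $K$ is replaced by $Y$. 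I expect no additional difficulty beyond quoting the example correctly.
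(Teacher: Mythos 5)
Your proposal is correct and follows essentially the same route as the paper: the corollary is stated there without a separate proof precisely because it is immediate from the preceding example, and your argument simply reads off from that example the facts that $r_1$ and $r_2$ are $L$-equivalent finite-to-one retractions with $r_1$ perfect and $r_2$ not closed. Your explicit check that every fiber has at most two points (identity on $Z$, injective embedding on the other summand) is a harmless and correct elaboration of what the paper leaves implicit.
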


\begin{ob}\rm
Note the following relationship: each pair of parallel retracts are parallel $L$-retracts, and therefore they are 
equivalent $L$-retracts. On the other hand, we know that there are parallel $L$-retracts that are not parallel retracts,
in fact, let $K_1$ and $K_2$ be two $L$-equivalent spaces that are not homeomorphic, then $X=K_1\oplus K_2$ contains both
spaces as parallel $L$-retracts that clearly are not parallel retracts. However, the following question arises:
{\sl are two equivalent $L$-retracts always parallel $L$-retracts?}
\end{ob}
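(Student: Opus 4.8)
The plan is to establish the two assertions contained in the Observation and then to pin down precisely why the closing question resists the same techniques. Both assertions rest on functoriality of $\mathbf L$ together with the decomposition $L(K_1\oplus K_2)\cong L(K_1)\oplus L(K_2)$ furnished by Corollary \ref{natiso}.

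For the first assertion, call $K_1,K_2$ \emph{parallel retracts} of $X$ if there are continuous retractions $\rho_1\from X\to K_1$ and $\rho_2\from X\to K_2$ with $\rho_1\circ\rho_2=\rho_1$ and $\rho_2\circ\rho_1=\rho_2$ (the topological analogue of the definition for $L$-retracts). Since every retract is an $L$-retract, the linearizations $r_i=(\rho_i)_\#\from L(X)\to L(K_i)\subset L(X)$ are continuous linear retractions. Functoriality of $\mathbf L$ gives $r_1\circ r_2=(\rho_1\circ\rho_2)_\#=(\rho_1)_\#=r_1$ and, symmetrically, $r_2\circ r_1=r_2$, which is exactly the definition of parallel $L$-retracts. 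To pass from parallel $L$-retracts to equivalent $L$-retracts I would take $i=\id_{L(X)}$ and $j=r_2|L(K_1)$; the latter is a topological isomorphism onto $L(K_2)$ by the Proposition following the definition of parallel $L$-retracts, and $r_2\circ r_1=r_2$ yields $j\circ r_1=r_2=r_2\circ i$, as required.

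For the second assertion I would make the stated example explicit. Choose $L$-equivalent spaces $K_1,K_2$ that are not homeomorphic, set $X=K_1\oplus K_2$, and use $L(X)\cong L(K_1)\oplus L(K_2)$ to write elements as pairs $(a,b)$; fix a topological isomorphism $\varphi\from L(K_1)\to L(K_2)$. The maps $r_1(a,b)=a+\varphi^{-1}(b)$ and $r_2(a,b)=\varphi(a)+b$ are continuous linear retractions onto $L(K_1)$ and $L(K_2)$, and a one-line computation gives $r_1\circ r_2=r_1$ and $r_2\circ r_1=r_2$, so $K_1,K_2$ are parallel $L$-retracts. They are \emph{not} parallel retracts: any topological retractions $\rho_1,\rho_2$ satisfying the parallel identities would make $\rho_1|K_2\from K_2\to K_1$ a homeomorphism with inverse $\rho_2|K_1$, contradicting the choice of $K_1,K_2$.

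The main obstacle is the closing question, whether equivalent $L$-retracts are always parallel. The natural attempt starts from isomorphisms $i\from L(X)\to L(X)$, $j\from L(K_1)\to L(K_2)$ and retractions $r_1,r_2$ with $j\circ r_1=r_2\circ i$, and tries to modify $r_1,r_2$ into a genuinely parallel pair. The difficulty is that equivalence couples the two retractions only through the ambient automorphism $i$, and there is no evident normalization that absorbs a nontrivial $i$ so as to force the rigid identities $r_1\circ r_2=r_1$ and $r_2\circ r_1=r_2$; I expect that settling this requires either a sharper algebraic reduction of the data or an explicit counterexample, and I would leave it open exactly as the Observation does.
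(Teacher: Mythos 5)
Your proposal is correct and matches the argument the paper leaves implicit in this Observation: functoriality of $\mathbf L$ (plus the fact that every retract is an $L$-retract) gives the first assertion, the decomposition $L(K_1\oplus K_2)\cong L(K_1)\oplus L(K_2)$ with the twist retractions $r_1(a,b)=a+\varphi^{-1}(b)$, $r_2(a,b)=\varphi(a)+b$ gives the second, and the closing question is rightly left open. The only cosmetic difference is that you witness equivalence of parallel $L$-retracts with $i=\id_{L(X)}$ and $j=r_2|L(K_1)$, whereas the paper's surrounding discussion (Theorem \ref{Principal}) produces the pair $i(\alpha)=r_1(\alpha)+r_2(\alpha)-\alpha$ and $s_2=r_2|L(K_1)$; both choices satisfy the defining identity.
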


\bibliographystyle{abbrv} 

\end{document}